\newtheorem{lemma}{Lemma}[section]
\newtheorem{proposition}[lemma]{Proposition}
\newtheorem{prob}[lemma]{Problem}
\newtheorem{theorem}[lemma]{Theorem}
\newtheorem{corollary}[lemma]{Corollary}
\newtheorem*{special theorem}{My Specially-Named Theorem}
\begin{document}


\pagestyle{plain}


\title{Discrete valuation rings, partitions and $p$-groups I}
\author{Boubakeur Bahri and Yassine Guerboussa}
\date{\today}

\maketitle

\begin{center}
\textit{In memory of  Mohsen S. Ghoraishi (...-2022)}.
\end{center}

\begin{abstract}
A finite abelian $p$-group having an automorphism $x$ such that $1+\ldots+x^{p-1}=0$, can be viewed as a module over an appropriate discrete valuation ring $\mathcal{O}$ containing $\mathbb{Z}_p$ (the ring of $p$-adic integer).  This yields the natural problem of comparing the invariants of $A$ as a $\mathbb{Z}_p$-module to its invariants as an $\mathcal{O}$-module.   We solve the latter problem in a more general context, and give some  applications to the structure of some $p$-groups and their automorphisms.

\end{abstract}\vspace{1cm}


\textbf{Keywords}: $p$-groups,  integer partitions, discrete valuation rings.

\section{Introduction; the main results}
Let $\mathfrak{o}$ and $\mathfrak{O}$ be two discrete valuation rings with maximal ideals $\mathfrak{p}$ and $\mathfrak{P}$ respectively,  and suppose  that we have  an embedding 
$\rho:\mathfrak{o}\to \mathfrak{O}$, for which $\mathfrak{O}$ is a finitely generated  $\mathfrak{o}$-module.  Therefore, the residual field $\mathfrak{o}/\mathfrak{p}$ embeds in $\mathfrak{O}/\mathfrak{P}$, and the corresponding extension's degree $d=[\mathfrak{O}/\mathfrak{P}:\mathfrak{o}/\mathfrak{p}]$  is finite. Moreover, as $\mathfrak{p}\mathfrak{O}\neq \mathfrak{O}$ (by Nakayama's lemma), there exists a unique integer $e\geq 1$ such that 
$\mathfrak{p}\mathfrak{O}=\mathfrak{P}^e$. 

Let us denote by $\mathcal{F}_{\mathfrak{O}}$ (resp. $\mathcal{F}_{\mathfrak{o}}$) the category of finitely generated torsion modules over $\mathfrak{O}$ (resp. over  $\mathfrak{o}$).  For an object  $M$ in $\mathcal{F}_{\mathfrak{O}}$, we shall denote by $\rho_{\ast} M$ the $\mathfrak{o}$-module obtained from $M$ by   restricting the scalars to $\mathfrak{o}$.  Since $\mathfrak{O}$ is finitely generated over $\mathfrak{o}$, and $e\geq 1$, it follows that   $\rho_* M$ lies in $\mathcal{F}_{\mathfrak{o}}$; so we have a canonical additive functor $M\mapsto \rho_* M$, from  $\mathcal{F}_{\mathfrak{O}}$ to $\mathcal{F}_{\mathfrak{o}}$.   Now, as $\mathfrak{O}$ is a principal ideal domain, to  $M$  corresponds a unique partition $\lambda=(n_1,\ldots,n_s)$ such that
$$M\cong \bigoplus_{i=1}^{s} \mathfrak{O}/\mathfrak{P}^{n_i}.$$
Similarly,  there exists a unique partition  $\rho_{\ast}(\lambda)=(m_1,\ldots,m_r)$ such that
	$$\rho_{\ast}M=\bigoplus_{i= 1}^r \mathfrak{o}/\mathfrak{p}^{m_i}.$$
Thus, we have the natural problem of determining the relationship between the invariants $(n_1,\ldots,n_s)$ and $(m_1,\ldots,m_r)$  above;  in other words:
	\begin{prob}\label{The Combinatorial problem} For all $ \mathfrak{o}$, $\mathfrak{O}$, and $\rho:\mathfrak{o}\to \mathfrak{O}$ satisfying the above assumptions, determine the corresponding map $\lambda \mapsto \rho_{\ast}(\lambda)$.
	\end{prob}
 
This problem is motivated by a situation that often occurs  in studying  automorphisms of finite $p$-groups.  The latter is the case of an abelian  finite $p$-group $A$ on which  a cyclic group $\langle x \rangle$ acts  such that
\begin{equation}\label{Basic contraint on modules}
a \cdot (1+x+\cdots+x^{p-1})=0, \quad \mbox{ for all } a\in A.
\end{equation} 
(That is, $x$ induces a \textit{splitting automorphism }of order $p$ of $A$.)

In such a  case, $A$ has a canonical structure of a module over $\mathbb{Z}_p$ (the ring of $p$-adic integers).   Moreover, the action of $x$ on $A$ extends naturally to the polynomial ring  $\mathbb{Z}_p[X]$, with $X$ acts on $A$ as the endomorphism $x-1$, that is  
$a\cdot f=a\cdot f(x-1)$,  for all  $a\in A$ and all $f\in \mathbb{Z}_p[X]$.
\medskip 

Set $g=\sum_{i= 1}^{p}\binom{p}{i}X^{i-1}$. Plainly, we have $g=((X+1)^{p}-1)/X$, so $$g(x-1)=1+x+\cdots+x^{p-1}, \quad    \mbox{ and } \quad A\cdot g=0.$$  
Hence,  $A$ is in fact a module over $\mathfrak{O}=
\mathbb{Z}_p[X]/(g)$.  Moreover, since $g$ is an Eisenstein polynomial,  $\mathfrak{O}$ is a discrete valuation ring with maximal ideal  $\mathfrak{P}=(\pi)$, where $\pi$ denotes the canonical image of $X$ in $\mathfrak{O}$ (cf. \cite[Chap. I, \S 6]{Serre}; see also \S  \ref{Concl Sec}).  To summarize, we have the following fact (which we shall use freely in the sequel). 
\medskip 

\textbf{($*$)} \emph{If  $A$ is a finite abelian $p$-group, and $\langle x \rangle$  is a cyclic group acting on $A$ and satisfying the condition (\ref{Basic contraint on modules}), then $A$ is a module over the discrete valuation ring   $\mathfrak{O}=
\mathbb{Z}_p[X]/(g)$ under the action
$$a\cdot f(\pi)=a\cdot f(x-1), \quad \mbox{ for all }  a\in A \mbox{ and all } f\in \mathbb{Z}_p[X],$$
with $\pi$ denotes the canonical image of $X$ in $\mathfrak{O}$.} 

\medskip

In the above situation, we may compare  the invariants of $A$ as an abelian $p$-group (equivalently, as a $\mathbb{Z}_p$-module) with its invariants as a module over  $\mathbb{Z}_p[X]/(g)$.  In other words, we are led to Problem \ref{The Combinatorial problem}, for $\mathfrak{o}= \mathbb{Z}_p$ and $\mathfrak{O}=
\mathbb{Z}_p[X]/(g)$.  As we shall see below, solving the latter yields  useful restrictions on the structure of $A$, which can be applied in several situations, specially  in studying  the  automorphisms of finite $p$-groups.

\medskip

We start by giving a solution to Problem \ref{The Combinatorial problem};  we need  to introduce some notations. For  a partition  $\lambda=(n_1,\ldots,n_s)$,  we shall write $\ell(\lambda)$ for the length of $\lambda$, and $\vert\lambda\vert$ for the sum of its parts, that is  $\ell(\lambda)=s$, and  $\vert\lambda\vert=\sum_{i= 1}^{s}n_i$. If  $l$ is a positive  integer,  then we denote by $\lambda_{e,l}$   the sub-partition of  $\lambda$ formed by the parts $n_j$ that satisfy $(l-1)e<n_j \leq le$ (recall that $e$ is the positive integer satisfying $\mathfrak{p}\mathfrak{O}=\mathfrak{P}^e$ ).  We  define the weight $w_{e,l}(\lambda)$ of $\lambda$ at $(e,l)$  by  $$w_{e,l}(\lambda)=\sum_{n_j} (n_j- (l-1)e),$$
where $n_j$ ranges  over all the components of $\lambda_{e,l}$.  If $\lambda_{e,l}$ is empty, we set  $\ell(\lambda_{e,j})=\vert\lambda_{e,j}\vert=0$. Clearly, we have 
\begin{equation}\label{The relation between the weight and the sum of the parts}
w_{e,l}(\lambda)= \vert\lambda_{e,l}\vert-\ell(\lambda_{e,l})(l-1)e.
\end{equation}
Finally, we need  the following key ingredient:
\begin{equation}\label{The invariants of the associated abelian group}
f_{e,l}(\lambda)=\ell(\lambda_{e,l+1})e-w_{e,l+1}(\lambda)+w_{e,l}(\lambda).
\end{equation}

\begin{theorem}\label{Main 1}
	Let  $M$ be an object in $\mathcal{F}_{\mathfrak{O}}$, and  $\lambda$ be its corresponding partition.  Then, 
	$$\rho_{\ast}M\cong \bigoplus_{i\geq 1} (\mathfrak{o}/{\mathfrak{p}^i})^{f_{e,i}(\lambda) d}.$$

\end{theorem}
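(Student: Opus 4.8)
The plan is to route the computation through the $\mathfrak{p}$-adic filtration on $\rho_*M$: first express the dimensions of its graded pieces in terms of $\lambda$, then read off the multiplicities $c_i$ in $\rho_*M\cong\bigoplus_{i\ge1}(\mathfrak{o}/\mathfrak{p}^i)^{c_i}$ as second differences of those dimensions, and finally recognize the result as $f_{e,i}(\lambda)d$ via a short combinatorial identity. Two preliminary observations do most of the work. Writing $\mathfrak{p}=(\varpi)$, the image $\rho(\varpi)$ generates $\mathfrak{p}\mathfrak{O}=\mathfrak{P}^e$, so $\rho(\varpi)^i\mathfrak{O}=\mathfrak{P}^{ei}$ and hence $\mathfrak{p}^i(\rho_*N)=\mathfrak{P}^{ei}N$ for every $N$ in $\mathcal{F}_{\mathfrak{O}}$. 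And since every simple $\mathfrak{O}$-module is isomorphic to $\mathfrak{O}/\mathfrak{P}$, whose dimension over $k:=\mathfrak{o}/\mathfrak{p}$ is $d$, one has $\dim_k V=d\cdot\operatorname{length}_{\mathfrak{O}}(V)$ for every finite-length $\mathfrak{O}$-module $V$.

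Starting from $M\cong\bigoplus_l\mathfrak{O}/\mathfrak{P}^{n_l}$ one has $\mathfrak{P}^jM\cong\bigoplus_l\mathfrak{O}/\mathfrak{P}^{\max(n_l-j,0)}$, so $\operatorname{length}_{\mathfrak{O}}(\mathfrak{P}^jM)=\sum_l\max(n_l-j,0)$. Combining this with the two observations gives $\dim_k\bigl(\mathfrak{p}^i(\rho_*M)\bigr)=d\,W(i)$ for all $i\ge0$, where I set $W(i):=\sum_l\max(n_l-ei,0)$. By elementary divisor theory over the DVR $\mathfrak{o}$, the number of summands $\mathfrak{o}/\mathfrak{p}^i$ in $\rho_*M$ equals
$$c_i=\dim_k\bigl(\mathfrak{p}^{i-1}(\rho_*M)/\mathfrak{p}^{i}(\rho_*M)\bigr)-\dim_k\bigl(\mathfrak{p}^{i}(\rho_*M)/\mathfrak{p}^{i+1}(\rho_*M)\bigr)=d\bigl(W(i-1)-2W(i)+W(i+1)\bigr),$$
so the theorem reduces to the identity $W(i-1)-2W(i)+W(i+1)=f_{e,i}(\lambda)$.

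To obtain that identity I would evaluate the first difference $W(i)-W(i+1)=\sum_l\bigl(\max(n_l-ei,0)-\max(n_l-e(i+1),0)\bigr)$ by sorting the parts $n_l$ into three groups: those with $n_l\le ei$ contribute $0$; those with $ei<n_l\le e(i+1)$ — that is, precisely the parts constituting $\lambda_{e,i+1}$ — contribute $n_l-ei$ each, for a total of $w_{e,i+1}(\lambda)$; and those with $n_l>e(i+1)$ contribute $e$ each, for a total of $e\sum_{m\ge i+2}\ell(\lambda_{e,m})$. Hence $W(i)-W(i+1)=w_{e,i+1}(\lambda)+e\sum_{m\ge i+2}\ell(\lambda_{e,m})$; writing the same formula with $i-1$ in place of $i$ and subtracting leaves $W(i-1)-2W(i)+W(i+1)=w_{e,i}(\lambda)-w_{e,i+1}(\lambda)+e\,\ell(\lambda_{e,i+1})$, which is exactly $f_{e,i}(\lambda)$ by (\ref{The invariants of the associated abelian group}). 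Thus $c_i=f_{e,i}(\lambda)d$, as claimed.

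The conceptual content is all in the two preliminary observations; the step most prone to slips is the sorting argument in the last paragraph, where one must keep straight the half-open convention defining $\lambda_{e,l}$ (a part equal to $le$ lies in $\lambda_{e,l}$, not in $\lambda_{e,l+1}$) and observe that the tails $\sum_{m\ge i+2}\ell(\lambda_{e,m})$ are finite because $\lambda$ has only finitely many parts. It is worth noting that no completeness or separability hypothesis on the extension $\mathfrak{O}/\mathfrak{o}$ is needed anywhere, and that the additivity of $\rho_*$ is visible here as the additivity of $W$ over the parts of $\lambda$.
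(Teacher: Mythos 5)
Your argument is correct, and it takes a genuinely different route from the paper's. The paper first determines the $\mathfrak{o}$-module structure of a single cyclic module explicitly (Lemma \ref{Basic lemma} and Proposition \ref{First passage between structures}: for $n=le+r$ with $0<r\le e$ one has $\mathfrak{O}/\mathfrak{P}^{n}\cong(\mathfrak{o}/\mathfrak{p}^{l+1})^{rd}\oplus(\mathfrak{o}/\mathfrak{p}^{l})^{(e-r)d}$) and then aggregates these summands over the parts of $\lambda$, grouped by the blocks $\lambda_{e,l}$, to count the copies of each $\mathfrak{o}/\mathfrak{p}^{i}$. You never decompose the individual cyclic pieces: you compute the single numerical invariant $\operatorname{length}_{\mathfrak{o}}(\mathfrak{p}^{i}\rho_{*}M)=dW(i)$ from the identities $\mathfrak{p}^{i}\rho_{*}M=\mathfrak{P}^{ei}M$ and $\operatorname{length}_{\mathfrak{o}}=d\cdot\operatorname{length}_{\mathfrak{O}}$, and then read off the multiplicities as second differences, reducing the theorem to the identity $W(i-1)-2W(i)+W(i+1)=f_{e,i}(\lambda)$, which your three-way sort of the parts verifies correctly against the paper's conventions (a part equal to $le$ indeed belongs to $\lambda_{e,l}$, and the middle group contributes exactly $w_{e,i+1}(\lambda)$). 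One notational slip: you write $\dim_{k}$ for the $\mathfrak{o}$-length of $\mathfrak{p}^{i}\rho_{*}M$, which is literally a $k$-dimension only for the graded quotients $\mathfrak{p}^{i-1}\rho_{*}M/\mathfrak{p}^{i}\rho_{*}M$; the intended meaning and the ensuing computation are unaffected. Your route is shorter and exhibits $f_{e,i}(\lambda)$ naturally as a discrete second derivative of the weight function $W$, whereas the paper's route, though longer, produces Proposition \ref{First passage between structures} as a standalone statement that is reused later (in Corollary \ref{Structure of om as abelian groups} for $\lambda=(n)$ and in the proof of Theorem \ref{Main 2}(ii)).
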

Using the notation $(1^{a_1}\ldots i^{a_i}\dots)$ for the partition where each  $i$ appears $a_i$ times, it follows that  the partition $\rho_{\ast}(\lambda)$ corresponding to $\rho_{\ast}M$ is equal to $$\rho_{\ast}(\lambda)=(1^{f_{e,1}(\lambda)d}\ldots i^{f_{e,i}(\lambda)d}\ldots).$$

\medskip 

Take $\mathfrak{o}=\mathbb{Z}_p$ and $\mathfrak{O}=\mathbb{Z}_p[X]/(g)$, with $g=\sum_{i=1}^{p} \binom{p}{i} X^{i-1}$. We have $\mathfrak{P}=(\pi)$, where $\pi$ is the canonical image of $X$ in $\mathfrak{O}$,  and $p\mathfrak{O}=\mathfrak{P}^{p-1}$ (so $e=p-1$). The residual fields are  $\mathfrak{O}/\mathfrak{P}=\mathfrak{o}/\mathfrak{p}=\mathbb{Z}/p\mathbb{Z}$,  so $d=1$.  Note also that $\mathfrak{o}/\mathfrak{p}^{n}\cong  \mathbb{Z}/p^{n}\mathbb{Z}$, for all $n\geq 1$.  Applying Theorem \ref{Main 1} with these data gives the following.
\begin{corollary}\label{Structure of om as abelian groups}  Let $A$ be a finite abelian $p$-group, and $\langle x \rangle$  a cyclic group acting on $A$ such that $A\cdot (1+\cdots+x^{p-1})=0$.   If $\lambda$ is the partition corresponding to $A$ viewed as a module over  $\mathfrak{O}=\mathbb{Z}_p[X]/(g)$, then 
	$$A\cong \bigoplus_{i\geq 1} \mathbb{Z}/p^{f_{p-1,i}(\lambda)}\mathbb{Z}.$$
In particular, for $\lambda=(n)$,   we have
	$$A\cong (\mathbb{Z}/p^{l+1}\mathbb{Z})^{r} \oplus (\mathbb{Z}/p^{l}\mathbb{Z})^{p-r-1}, $$
	where $n=l(p-1)+r$, and $0< r \leq p-1$. 
\end{corollary}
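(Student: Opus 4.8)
The plan is to deduce both assertions from Theorem~\ref{Main 1}, the first by specializing the data and the second by a short explicit computation of the functions $f_{e,l}$.

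For the first isomorphism, I would invoke the fact ($*$): under the hypothesis $A\cdot(1+\cdots+x^{p-1})=0$, the group $A$ becomes a module over $\mathfrak{O}=\mathbb{Z}_p[X]/(g)$, and since $A$ is finite it is an object of $\mathcal{F}_{\mathfrak{O}}$; let $\lambda$ be its associated partition. The point to check is that restricting scalars along $\rho:\mathbb{Z}_p\hookrightarrow\mathfrak{O}$ returns $A$ with its \emph{original} abelian-group structure, i.e. $\rho_\ast A=A$ as a $\mathbb{Z}_p$-module; this is clear because a finite abelian $p$-group carries a unique $\mathbb{Z}_p$-module structure and the $X$-action was defined through the given action of $x$. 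With the data recorded just before the corollary ($e=p-1$, $d=1$, $\mathfrak{o}/\mathfrak{p}^i\cong\mathbb{Z}/p^i\mathbb{Z}$), Theorem~\ref{Main 1} then reads $A\cong\bigoplus_{i\ge 1}(\mathbb{Z}/p^i\mathbb{Z})^{f_{p-1,i}(\lambda)}$, which is exactly the claimed decomposition.

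For the case $\lambda=(n)$, write $n=l(p-1)+r$ with $0<r\le p-1$. First I would identify the nonempty sub-partitions: since $\lambda$ has the single part $n$, the inequality $(j-1)(p-1)<n\le j(p-1)$ holds for exactly one value of $j$, namely $j=l+1$ (this uses $0<r\le p-1$, and covers the boundary cases $r=p-1$ and $l=0$). Hence $\lambda_{p-1,l+1}=(n)$ is the only nonempty sub-partition, with $\ell(\lambda_{p-1,l+1})=1$ and, by (\ref{The relation between the weight and the sum of the parts}), $w_{p-1,l+1}(\lambda)=n-l(p-1)=r$; all other lengths and weights are $0$. Substituting into (\ref{The invariants of the associated abelian group}) gives $f_{p-1,l}(\lambda)=(p-1)-r$, $f_{p-1,l+1}(\lambda)=r$, and $f_{p-1,i}(\lambda)=0$ for every other $i\ge 1$. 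Plugging these exponents into the first part yields $A\cong(\mathbb{Z}/p^l\mathbb{Z})^{p-r-1}\oplus(\mathbb{Z}/p^{l+1}\mathbb{Z})^{r}$, which is the stated formula (with the convention $\mathbb{Z}/p^0\mathbb{Z}=0$, so that the case $l=0$ is automatically included).

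The argument is essentially bookkeeping; the only place where a slip could occur is in pinning down the ranges of the indices---verifying that for a one-part partition exactly one $\lambda_{e,j}$ is nonempty and that the $f_{p-1,i}$ vanish for $i\notin\{l,l+1\}$---and in checking that the degenerate cases $l=0$ (where the summand $\mathbb{Z}/p^0\mathbb{Z}$ drops out) and $r=p-1$ (where the multiplicity $p-r-1$ equals $0$) are consistent with the formula. Beyond Theorem~\ref{Main 1} I do not anticipate any real obstacle.
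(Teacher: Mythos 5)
Your argument is correct and follows the paper's own route: the corollary is obtained simply by specializing Theorem~\ref{Main 1} to $\mathfrak{o}=\mathbb{Z}_p$, $\mathfrak{O}=\mathbb{Z}_p[X]/(g)$, $e=p-1$, $d=1$, and your explicit evaluation of the $f_{p-1,i}$ for a one-part partition reproduces exactly what Proposition~\ref{First passage between structures} gives directly. Note also that you have (rightly) read the first display as $\bigoplus_{i\geq 1}(\mathbb{Z}/p^{i}\mathbb{Z})^{f_{p-1,i}(\lambda)}$, which is what Theorem~\ref{Main 1} actually yields; the placement of $f_{p-1,i}(\lambda)$ in the exponent of $p$ in the corollary's statement is a misprint.
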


In fact, one can obtain detailed information on the structure of any extension $H$ of $A$ by $\langle x\rangle$.  In what follows, $\gamma_n(H)$ denotes  the $n$th term of the lower central series of $H$, and $d(H)$ the  minimal number of its generators.

 \begin{theorem}\label{Main 2} 
Let $A$ be a non-trivial finite abelian $p$-group,  $\langle x \rangle$ be a cyclic $p$-group acting on $A$ such that $A\cdot (1+\cdots+x^{p-1})=0$, and $H$ be any extension of $A$ by $\langle x \rangle$.  If $\lambda=(n_1,\ldots,n_s)$ is the partition corresponding to $A$ viewed as a module over  $\mathfrak{O}=\mathbb{Z}_p[X]/(g)$, then 
 	\begin{itemize}
 			\item[(i)] For every $n\geq 1$, we have $\gamma_{n+1}(H)=A\cdot \mathfrak{P}^{n}$.  
 		\item[(ii)]  $H$ is nilpotent of class $n_1$, and $A$ has exponent  $p^{m}$, with
 		$m=\lceil \dfrac{n_1}{p-1} \rceil$ (that is, $m$ is the smallest integer $\geq  \dfrac{n_1}{p-1}$).
 		\item[(iii)] For every $n\geq 1$, we have $A^{p^n}=\gamma_{np-n+1}(H)$ (note that $A^{p^n}$ corresponds to $p^n A$ in the additive notation).
 		\item[(iv)] For every  $j\geq 2$, $\gamma_{j}(H)/\gamma_{j+1}(H)$ is elementary abelian of rank $d_j$, where   $d_j$ is  the number of  parts of $\lambda$ that are $\geq j$.
 			\item[(v)] $A/\gamma_{2}(H)$ is elementary abelian of rank $d_1=\ell(\lambda)$, and   $$d(A)= \sum_{j=1}^{p-1} d_j.$$
 	\end{itemize}
\end{theorem}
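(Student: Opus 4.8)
The plan is to prove (i) first --- it identifies the lower central series of $H$ with the $\mathfrak{P}$-adic filtration of the $\mathfrak{O}$-module $A$ --- and then deduce (ii)--(v) by elementary computations inside $A\cong\bigoplus_{i=1}^{s}\mathfrak{O}/\mathfrak{P}^{n_{i}}$, using only $p\mathfrak{O}=\mathfrak{P}^{p-1}$ and $\mathfrak{O}/\mathfrak{P}\cong\mathbb{Z}/p\mathbb{Z}$ recorded above.

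For (i), fix a preimage $t\in H$ of a generator of $\langle x\rangle$, so that conjugation by $t$ realises the given action of $x$ on $A$. In the additive (module) notation one then has $[a,t]=a\cdot(x-1)=a\cdot\pi$ for $a\in A$, and more generally $[a,t^{k}]=a\cdot\big((\pi+1)^{k}-1\big)\in a\mathfrak{P}$. Each $A\mathfrak{P}^{n}$ is an $\mathfrak{O}$-submodule of the abelian normal subgroup $A$, hence is $t$-stable and normal in $H=\langle A,t\rangle$. I would then prove $\gamma_{n+1}(H)=A\mathfrak{P}^{n}$ by induction on $n\geq1$. For $n=1$: $[A,H]\subseteq A\mathfrak{P}$ (since $[a,a't^{k}]=[a,t^{k}]\in A\mathfrak{P}$ for $a,a'\in A$), so $A/A\mathfrak{P}$ is central in $H/A\mathfrak{P}$ with cyclic quotient $\langle x\rangle$, forcing $H/A\mathfrak{P}$ abelian, whence $\gamma_{2}(H)\subseteq A\mathfrak{P}$; the reverse inclusion is $A\mathfrak{P}=[A,t]\subseteq\gamma_{2}(H)$. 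For the inductive step, $\gamma_{n+2}(H)=[\gamma_{n+1}(H),H]=[A\mathfrak{P}^{n},H]$ is generated by the elements $[b,t^{k}]$, $b\in A\mathfrak{P}^{n}$ (again using $[b,a't^{k}]=[b,t^{k}]$); each lies in $b\mathfrak{P}\subseteq A\mathfrak{P}^{n+1}$, while already $\{[b,t]:b\in A\mathfrak{P}^{n}\}=A\mathfrak{P}^{n}\cdot\pi=A\mathfrak{P}^{n+1}$, so $\gamma_{n+2}(H)=A\mathfrak{P}^{n+1}$.

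Granting (i): one has $A\mathfrak{P}^{m}\cong\bigoplus_{i}\mathfrak{P}^{\min(m,n_{i})}/\mathfrak{P}^{n_{i}}$, which is $0$ precisely when $m\geq n_{1}$; hence $\gamma_{n_{1}+1}(H)=0\neq\gamma_{n_{1}}(H)$, giving nilpotency class $n_{1}$, and $p^{m}$ annihilates $A$ iff $p^{m}\mathfrak{O}=\mathfrak{P}^{m(p-1)}$ does, i.e. iff $m(p-1)\geq n_{1}$, so $A$ has exponent $p^{\lceil n_{1}/(p-1)\rceil}$; this proves (ii). Part (iii) is immediate: $A^{p^{n}}$ corresponds to $p^{n}A=A\cdot p^{n}\mathfrak{O}=A\mathfrak{P}^{n(p-1)}=\gamma_{n(p-1)+1}(H)=\gamma_{np-n+1}(H)$. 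For (iv) and (v), a summand-by-summand count on $\bigoplus_{i}\mathfrak{O}/\mathfrak{P}^{n_{i}}$ shows that for every $j\geq1$ the $i$-th summand contributes a copy of $\mathfrak{P}^{j-1}/\mathfrak{P}^{j}\cong\mathfrak{O}/\mathfrak{P}\cong\mathbb{Z}/p\mathbb{Z}$ to $A\mathfrak{P}^{j-1}/A\mathfrak{P}^{j}$ when $n_{i}\geq j$, and $0$ otherwise; thus $A\mathfrak{P}^{j-1}/A\mathfrak{P}^{j}$ is elementary abelian of rank $d_{j}=\#\{i:n_{i}\geq j\}$. For $j\geq2$ this equals $\gamma_{j}(H)/\gamma_{j+1}(H)$ by (i), which is (iv); for $j=1$ it is $A/\gamma_{2}(H)=A/A\mathfrak{P}$, of rank $d_{1}=s=\ell(\lambda)$. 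Finally $d(A)=\dim_{\mathbb{Z}/p\mathbb{Z}}(A/pA)$ and $pA=A\mathfrak{P}^{p-1}$, so the filtration $A\supseteq A\mathfrak{P}\supseteq\cdots\supseteq A\mathfrak{P}^{p-1}$ gives $d(A)=\sum_{j=1}^{p-1}d_{j}$.

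The one genuinely delicate point is (i), where $H$ need not split over $A$ and $\langle x\rangle$ may be strictly larger than $\mathbb{Z}/p\mathbb{Z}$. Both issues are handled by working with an arbitrary lift $t$ and exploiting that $\mathfrak{O}$-submodules of $A$ are automatically normal in $H$: then the commutators $[b,t^{k}]$ land in $b\mathfrak{P}$ regardless of the order of $t$ or of any choice of splitting, and the induction runs uniformly. Once (i) is established, the remaining parts are pure bookkeeping with the discrete valuation ring $\mathfrak{O}$.
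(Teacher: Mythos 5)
Your proposal is correct and follows essentially the same route as the paper: prove (i) by induction, using that every element of $H$ is $at^{k}$ with $a\in A$ and that $[a,t^{k}]=a\cdot(x^{k}-1)\in A\cdot(x-1)$ (the paper writes this via $(x^{k}-1)=(1+\cdots+x^{k-1})(x-1)$, you via $(\pi+1)^{k}-1\in\mathfrak{P}$, which is the same observation), together with the centrality of $A/[A,H]$ and cyclicity of $H/A$ to get the reverse inclusion; parts (ii)--(v) are then the same summand-by-summand bookkeeping, with your count $d(A)=\dim(A/pA)$ via the filtration $A\supseteq A\mathfrak{P}\supseteq\cdots\supseteq A\mathfrak{P}^{p-1}$ matching the paper's index computation for $\vert A:\gamma_{p}(H)\vert$.
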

Note that   $(d_1,d_2,\ldots)$ is just the conjugate of the partition $\lambda$;  consequently  $\lambda$ can be covered from the factors  of the lower series of $H$.  The terms of the upper central series $Z_n(H)$ can be determiner similarly. For instance, $Z(H)$ is generated by $\langle x^p\rangle$ and the subgroup $A^{\langle x\rangle}$ of $A$ formed by the fixed elements under $x$; one easily sees that
$A^{\langle x\rangle}\cong \bigoplus_{i=1}^{s} \mathfrak{P}^{n_i-1}/\mathfrak{P}^{n_i}$, which is elementary abelian of rank $\ell(\lambda)$.

\medskip 

As an illustration of the utility of Corollary \ref{Structure of om as abelian groups} and Theorem \ref{Main 2}  in studying  automorphisms of finite $p$-groups, we shall prove the following.
\begin{theorem}\label{App to the noninner Conj}
	Let $G$ be a finite non-abelian $p$-group such that   $Z(\Phi(G))$, the center of the Frattini subgroup, can be generated by $p-2$ elements.   Then $G$ has a non-inner automorphism of order $p$ that fixes $\Phi(G)$ element-wise.
\end{theorem}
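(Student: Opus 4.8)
The plan is to branch on whether $C_G(Z(\Phi(G)))$ equals $\Phi(G)$. Write $A=Z(\Phi(G))$ and $\bar G=G/\Phi(G)$; since $G$ is non-abelian, $\Phi(G)\neq 1$ and so $A\neq 1$, while $\bar G$ is elementary abelian of rank $d(G)\geq 2$. If $C_G(A)\neq\Phi(G)$, then $G$ already has a non-inner automorphism of order $p$ fixing $\Phi(G)$ elementwise, by the classical reduction of Deaconescu and Silberberg; so I may assume $C_G(A)=\Phi(G)$, which says precisely that the conjugation action of $\bar G$ on $A$ is faithful.

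This is where Theorem~\ref{Main 2} and Corollary~\ref{Structure of om as abelian groups} are used. Fix $\bar x\in\bar G$ and set $B=[A,\bar x]=A(\bar x-1)$. With $N=1+\bar x+\cdots+\bar x^{p-1}$ one has $B\cdot N=A(\bar x-1)N=AN(\bar x-1)=A(\bar x^{p}-1)=0$, so by $(*)$ the group $B$ is a module over $\mathfrak O=\mathbb Z_p[X]/(g)$ (here $e=p-1$). As $B\leq A$ is a subgroup of a finite abelian $p$-group, $d(B)\leq d(A)\leq p-2$; Theorem~\ref{Main 2}(v) gives $d(B)=\sum_{j=1}^{p-1}d_j$, where $(d_1,d_2,\dots)$ is the conjugate of the partition of $B$, so $d_{p-1}=0$ and every part of that partition is $\leq p-2<e$. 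By Corollary~\ref{Structure of om as abelian groups} (the case $\lambda=(n)$ with $n\leq p-2$, i.e. $l=0$) every cyclic $\mathfrak O$-summand of $B$ is elementary abelian; hence $B=[A,\bar x]$ is elementary abelian, for every $\bar x\in\bar G$.

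Therefore $[pA,\bar x]=p[A,\bar x]=1$ for all $\bar x$, so $pA\leq A^{\bar G}$; equivalently $\bar G$ acts trivially on $pA\cong A/A[p]$, and $[A,G]$ is elementary abelian. Two further remarks in the standing case: $C_G(\Phi(G))=A$ (it lies in $C_G(A)=\Phi(G)$ and centralizes $\Phi(G)$, hence is abelian and contained in $Z(\Phi(G))=A$), and $A^{\bar G}=A\cap Z(G)=Z(G)$, since $Z(G)\leq C_G(\Phi(G))=A$. Now the automorphisms of $G$ that fix $\Phi(G)$ elementwise and induce the identity on $\bar G$ are exactly the maps $g\mapsto g\,\delta(\bar g)$ with $\delta\in\operatorname{Der}(\bar G,A)$; such an automorphism has order dividing $p$ iff $\delta$ takes values in $A[p]$, and it is inner iff $\delta$ is a coboundary --- the coboundaries are the maps $\bar g\mapsto[g,c]$, $c\in C_G(\Phi(G))=A$, and they automatically take values in $[A,\bar G]\leq A[p]$. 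So it suffices to produce $\delta\in\operatorname{Der}(\bar G,A[p])\setminus B^1(\bar G,A)$, and a short order count shows this is possible as soon as
$$|H^1(\bar G,A[p])|>[\,pA:pZ(G)\,].$$

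Finally one must establish this inequality (equivalently, rule the standing case out). That $H^1(\bar G,A[p])\neq 0$ is already forced: picking a subgroup $\bar K<\bar G$ of index $p$, inflation--restriction embeds $H^1(\bar G/\bar K,A[p]^{\bar K})$ into $H^1(\bar G,A[p])$, and vanishing of the left-hand side would make the $\mathbb F_p[C_p]$-module $A[p]^{\bar K}$ free, forcing $p\mid\dim_{\mathbb F_p}A[p]^{\bar K}$, which is impossible since $0<\dim A[p]^{\bar G}\leq\dim A[p]^{\bar K}\leq\dim A[p]=d(A)\leq p-2$. Upgrading this to the required strict inequality is where I expect the real work to lie: because $(\bar x-1)$ is nilpotent of index $\leq d(A)\leq p-2<p-1$ on $A[p]$, the norm conditions defining $Z^1(\bar G,A[p])$ are vacuous, so $Z^1(\bar G,A[p])$ is just the solution space of the relations $(\bar x-1)m_{\bar y}=(\bar y-1)m_{\bar x}$; bounding its dimension from below and combining this with $pA\leq A^{\bar G}=Z(G)$ and $\dim A[p]=d(A)\leq p-2$ should give $|H^1(\bar G,A[p])|>[pA:pZ(G)]$. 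This concluding count --- the point at which the hypothesis ``$Z(\Phi(G))$ can be generated by $p-2$ elements'' is used decisively --- is the main obstacle; everything before it is either the cited reduction or a direct application of Theorem~\ref{Main 2} and Corollary~\ref{Structure of om as abelian groups}.
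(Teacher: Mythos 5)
Your overall strategy is sound and runs parallel to the paper's up to a point: the reduction to $C_G(Z(\Phi(G)))=\Phi(G)$ via Deaconescu--Silberberg, the identification of the automorphisms fixing $\Phi(G)$ elementwise with $Z^1(Q,A)$ and of the inner ones with $B^1(Q,A)$ (using $C_G(C_G(A))=A$), and the use of Corollary \ref{Structure of om as abelian groups} and Theorem \ref{Main 2} to show that each $[A,\bar x]$ is elementary abelian and hence $pA\leq A^{\bar G}=Z(G)$, are all correct. But the proof is not complete, and the gap you flag yourself as ``the main obstacle'' is genuine: everything reduces to the inequality $\vert \hat H^1(\bar G,A[p])\vert>[pA:pZ(G)]$, and your inflation--restriction argument only yields $\hat H^1(\bar G,A[p])\neq 0$, i.e.\ $\vert \hat H^1\vert\geq p$, whereas in the standing case one only knows $pA\leq Z(G)$, so a priori $[pA:pZ(G)]$ can be as large as $[pA:p^2A]=p^{d(pA)}\leq p^{p-2}$. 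The proposed lower bound on $\dim Z^1(\bar G,A[p])$ via the relations $(\bar x-1)m_{\bar y}=(\bar y-1)m_{\bar x}$ is not carried out, and it is not clear it can be pushed far enough.

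The paper closes exactly this gap with two ingredients you do not use. First, Lemma \ref{The structure of the modules generated by image of derivation}: for \emph{any} $1$-cocycle $\delta$ and any $x\in Q$, the $\langle x\rangle$-submodule generated by $\delta(x)$ is a \emph{cyclic} $\mathfrak{O}$-module $\mathfrak{O}/\mathfrak{P}^n$, which by Corollary \ref{Structure of om as abelian groups} needs $\min(n,p-1)$ generators as an abelian group; the hypothesis $d(A)\leq p-2$ then forces $n\leq p-2$, so $\delta(x)\in A[p]$. Thus every cocycle (not merely every coboundary) takes values in $A[p]$, $Z^1(Q,A)=Z^1(Q,A[p])$, and $\mathrm{Aut}(G;A)\cong Z^1(Q,A)$ has exponent $p$. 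This collapses your inequality to the single condition $\hat H^1(Q,A)\neq 0$ --- for the module $A$, not $A[p]$; the discrepancy between $B^1(Q,A)$ and $B^1(Q,A[p])$ is precisely the factor $[pA:pZ(G)]$ you are struggling with, which is why non-vanishing of $\hat H^1(\bar G,A[p])$ alone does not suffice. Second, $\hat H^1(Q,A)\neq 0$ is obtained from Theorem \ref{The cohomological auxilliary result}: the identity $a^{\tau_x}=pa$ (valid because $A\cdot(x-1)^{p-1}=0$ and $pA\cdot(x-1)=0$) together with Schmid's centralizer lemma (Lemma \ref{Schmidcentralizer}) shows that $A$ cannot be cohomologically trivial over the non-cyclic group $Q=G/\Phi(G)$, and the Gasch\"{u}tz--Uchida theorem then gives $\hat H^1(Q,A)\neq 0$. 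To repair your argument you should import both of these steps; without them the concluding count does not go through.
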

The above confirms in particular   \textit{the non-inner conjecture} (see \cite[Problem 4.13]{Mazurov}): \textit{every finite non-abelian $p$-group has non-inner automorphisms of order $p$},  for another class of $p$-groups.   For more  information and references about this conjecture see  Ghoraishi \cite{Ghoraishi}.  In fact,  Theorem \ref{App to the noninner Conj}   improves the main result in \cite{Ghoraishi}, which shows that the conjecture holds for $p$-groups with $\vert \Phi(G)\vert \leq p^5$.  Combining our result with the fact that the conjecture holds when $\Phi(G)$ is abelian, it follows that it holds  when $\vert \Phi(G)\vert \leq p^{p}$ (note that the latter bound can be improved to $p^{p+1}$).

Early works of  Gaschütz \cite{Gasch1965,Gasch1966}, and Schmid \cite{Schmid}, have shown a  pertinent relation between $p$-automorphisms and cohomological properties of finite $p$-groups, that can be applied to the non-inner conjecture (see also Abdollahi \cite{Ali1}, and \cite{Benmoussa}).  Our proof of Theorem \ref{App to the noninner Conj} relies on cohomology; the following result will be proved.
\begin{theorem}\label{The cohomological auxilliary result}
	Let $Q$ be a non-cyclic finite $p$-group,  and $A\neq 0$ be a $Q$-module that is also a finite $p$-group.  If $A$ can be generated by $p-2$ elements (as an abelian group), then $A$ is not cohomologically trivial. 
\end{theorem}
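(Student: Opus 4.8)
The plan is to reduce to $Q\cong\Z/p\Z\times\Z/p\Z$ and then translate cohomological triviality into a question about matrices over the group algebra $\mathbb F_pQ$. First, if $p=2$ there is nothing to prove: a group generated by $p-2=0$ elements is trivial, while $A\neq 0$; so assume $p\geq 3$. A non-cyclic finite $p$-group with $p$ odd contains a subgroup $E\cong\Z/p\Z\times\Z/p\Z$, since for $p$ odd the finite $p$-groups having a unique subgroup of order $p$ are precisely the cyclic ones. Cohomological triviality is inherited by subgroups, and the minimal number of generators $d(A)$ of the abelian group $A$ does not depend on the module structure; so it suffices to prove that a nonzero finite $p$-group $A$ which is a cohomologically trivial module over $Q=\Z/p\Z\times\Z/p\Z$ has $d(A)\geq p-1$.

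Assume for contradiction that $d(A)\leq p-2$. I will use the classical fact (see \cite{Serre}) that a module $A$ which is a finite $p$-group over a finite $p$-group $Q$ is cohomologically trivial if and only if $\operatorname{pd}_{\Z_pQ}(A)\leq 1$. Since $\Z_pQ$ is local, finitely generated projectives over it are free, so there is a short exact sequence $0\to\Z_pQ^{\,n}\xrightarrow{\Phi}\Z_pQ^{\,n}\to A\to 0$ with $\Phi$ injective (the ranks coincide because $A$ is torsion). Tensoring over $\Z_pQ$ with $\mathbb F_pQ=\Z_pQ/p$ yields the exact sequence
$$0\longrightarrow A[p]\longrightarrow \mathbb F_pQ^{\,n}\xrightarrow{\ \overline\Phi\ } \mathbb F_pQ^{\,n}\longrightarrow A/pA\longrightarrow 0,$$
in which $A[p]=\operatorname{Tor}_1^{\Z_pQ}(A,\mathbb F_pQ)$ is the $p$-torsion submodule and $\overline\Phi=\Phi\bmod p$. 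As $\dim_{\mathbb F_p}A[p]=\dim_{\mathbb F_p}A/pA=d(A)$, the $\mathbb F_pQ$-linear map $\overline\Phi$ has kernel and cokernel both of $\mathbb F_p$-dimension $d(A)$.

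Put $R=\mathbb F_pQ$; with $s=x-1$ and $t=y-1$ this is the local Artinian Gorenstein ring $\mathbb F_p[s,t]/(s^p,t^p)$, with one-dimensional socle $(s^{p-1}t^{p-1})$. Because $\operatorname{coker}\overline\Phi\cong A/pA\neq 0$, the matrix $\overline\Phi$ is singular modulo $\operatorname{rad}R$; performing Gaussian elimination over the local ring $\Z_pQ$ (using that $\mathrm{GL}_n(\Z_pQ)$ surjects onto $\mathrm{GL}_n(\mathbb F_p)$) one may assume $A=\operatorname{coker}\Psi$ with $\Psi\in M_m(\Z_pQ)$ injective and having all its entries in $\operatorname{rad}\Z_pQ$, where $m=d_{\Z_pQ}(A)\geq 1$; then $d(A)=\dim_{\mathbb F_p}\operatorname{coker}\overline\Psi$ for $\overline\Psi\in M_m(R)$ with entries in $\operatorname{rad}R$. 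Since $\overline\Psi$ has radical entries it annihilates the socle of $R^m$, maps $R^m$ into $(\operatorname{rad}R)^{\oplus m}$, and its $m$ columns cannot generate $(\operatorname{rad}R)^{\oplus m}$ (which needs $2m$ generators over $R$); a short dimension count with these observations gives $d(A)\geq 2m$, so it remains to treat small $m$. The heart of the matter is the ring-theoretic estimate, which in particular covers $m=1$: for every $0\neq z\in\operatorname{rad}R$ one has $\dim_{\mathbb F_p}R/zR=\dim_{\mathbb F_p}\operatorname{ann}_R(z)\geq p$, and more generally $\operatorname{coker}\overline\Psi\neq 0$ forces $\dim_{\mathbb F_p}\operatorname{coker}\overline\Psi\geq p-1$. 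For $z\notin(\operatorname{rad}R)^2$ this is clean: the linear substitutions of $s,t$ are algebra automorphisms of $R$ (because $(as+bt)^p=a^ps^p+b^pt^p=0$), and the higher-order part of $z$ can likewise be absorbed into such an automorphism, reducing to $z=s$, for which $\operatorname{ann}_R(s)=s^{p-1}R$ has dimension exactly $p$. For $z\in(\operatorname{rad}R)^2$, and for $m\geq 2$, one argues by regarding $R$ as a free module of rank $p$ over the chain ring $\mathbb F_p[s]/(s^p)$ — where a Smith-type normal form is available — and using the symmetry between $s$ and $t$; this is where the real computational work lies. Granting the estimate, $d(A)\geq p-1$ contradicts $d(A)\leq p-2$.

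I expect this last estimate to be the main obstacle; everything preceding it is formal. An alternative in the spirit of the present paper, which avoids that estimate, is to work with $B=(x-1)A$: one checks that $B\neq 0$, that $B$ is annihilated by $1+x+\cdots+x^{p-1}$ and hence is a module over $\mathfrak O=\Z_p[X]/(g)$, and — since $A$ is acyclic over $\langle x\rangle$, so that inflation gives $H^{\ast}(\langle y\rangle,A^{\langle x\rangle})\cong H^{\ast}(Q,A)$ — that $B$ is cohomologically trivial over $\langle y\rangle$, with $d(B)\leq d(A)$. Then Corollary~\ref{Structure of om as abelian groups} describes $B$ as an abelian group, while the elementary remarks that $d(A)\leq p-2$ forces $(z-1)^{p-1}A\subseteq pA$, hence $A^{\langle z\rangle}=N_zA\subseteq pA$ for every $z\in Q$ (with $N_z=1+z+\cdots+z^{p-1}$), should be pushed to a contradiction.
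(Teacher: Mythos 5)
Your main route has a genuine gap, and you flag it yourself: everything funnels into the estimate that a matrix $\overline\Psi\in M_m(R)$, $R=\mathbb F_p[s,t]/(s^p,t^p)$, with entries in $\mathrm{rad}\,R$ and nonzero cokernel satisfies $\dim_{\mathbb F_p}\mathrm{coker}\,\overline\Psi\geq p-1$, and you establish this only for $m=1$ with $z\notin(\mathrm{rad}\,R)^2$, deferring the cases $z\in(\mathrm{rad}\,R)^2$ and $m\geq 2$ to unspecified "computational work." Your bound $\dim\mathrm{coker}\,\overline\Psi\geq 2m$ is correct and disposes of $m\geq (p-1)/2$ (so the argument does close for $p=3$), but for $p\geq 5$ the range $1\leq m<(p-1)/2$ remains, and the deferred estimate is not a side issue: over $Q=\mathbb Z/p\times\mathbb Z/p$ it carries essentially the entire content of the theorem, so asserting it leaves the proof incomplete. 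The formal reductions preceding it (passing to $\mathbb Z/p\times\mathbb Z/p$, cohomological triviality $\Leftrightarrow$ $\mathrm{pd}_{\mathbb Z_pQ}A\leq 1$, the four-term sequence with $\dim A[p]=\dim A/pA=d(A)$, the minimal presentation) are all sound, but they only set the stage. Your closing "alternative" sketch likewise ends with "should be pushed to a contradiction" rather than a contradiction.

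For comparison, the paper's proof needs neither the reduction to $\mathbb Z/p\times\mathbb Z/p$ nor any homological algebra beyond degree $0$. Fix $x\in Q$ of order $p$ and set $B=A\cdot(x-1)$, an $\mathfrak O=\mathbb Z_p[X]/(g)$-module. Corollary \ref{Structure of om as abelian groups} forces every cyclic summand $\mathfrak O/\mathfrak P^{n}$ of $B$ to have $n\leq p-2$ (a summand with $n\geq p-1$ already requires $p-1$ generators as an abelian group), whence by Theorem \ref{Main 2}(ii) one gets $A\cdot(x-1)^{p-1}=0$ and $pA\cdot(x-1)=0$; expanding $1+x+\cdots+x^{p-1}=\sum_{i=1}^{p}\binom{p}{i}(x-1)^{i-1}$ then shows the norm $\tau_x$ acts as multiplication by $p$ on all of $A$. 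If $A$ were cohomologically trivial, $\hat H^0(\langle x\rangle,A)=0$ would give $A^{\langle x\rangle}=pA$, and Schmid's centralizer lemma (Lemma \ref{Schmidcentralizer}) would give $C_Q(pA)=\langle x\rangle$ for \emph{every} element $x$ of order $p$ at once, forcing $Q$ to have a unique subgroup of order $p$, hence (as $p>2$) to be cyclic --- a contradiction. This is exactly the missing endgame of your alternative sketch: the two steps you need to supply are the identity $\tau_x=p$ on $A$ (which is where Corollary \ref{Structure of om as abelian groups} and Theorem \ref{Main 2} enter) and the appeal to Lemma \ref{Schmidcentralizer}. If instead you wish to salvage the matrix route, you must actually prove the cokernel bound over $\mathbb F_p[s,t]/(s^p,t^p)$ for all $m$.
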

We may recall here that the $Q$-module $A$ is said to be  \textit{cohomologically trivial},  if
the Tate cohomology groups $\hat{H}^n(S,A)$
vanish, for all  $n\in \mathbb{Z}$ and all $S\leq Q$. 

A noteworthy is that  Corollary \ref{Structure of om as abelian groups} and Theorem \ref{Main 2} can be pushed further to obtain  results like  Theorems \ref{The cohomological auxilliary result} and \ref{App to the noninner Conj},  although this would remove us far beyond the aim of this note (cf. Section \ref{Cohom and p-Aut}).  We shall pursue that in a separate paper. 
\medskip 

Proofs of Theorems \ref{Main 1} and \ref{Main 2}  are given in \S\ref{Proof of Th1} and \S \ref{p-Groups} respectively.  In \S \ref{Cohom and p-Aut}, we discuss  the main ideas for approaching  \textit{the non-inner conjecture} via cohomology and how we could profit from Corollary \ref{Structure of om as abelian groups}  and Theorem \ref{Main 2} in this context;  proofs of Theorem \ref{App to the noninner Conj} and \ref{The cohomological auxilliary result} are given in that section.  In \S \ref{Concl Sec}, we discuss other  applications, generalizations, and   some combinatorial questions related to Problem \ref{The Combinatorial problem}.

\section{Proof of Theorem \ref{Main 1}}\label{Proof of Th1}
First, note that for all integers $i,j\geq 1$,  the $\mathfrak{o}$-modules $\mathfrak{O}/\mathfrak{P}^j$ and $\mathfrak{P}^{i}/\mathfrak{P}^{j+i}$ are isomorphic (the $\mathfrak{o}$-linear map $x\mapsto \pi^ix$ induces such an isomorphism, where   $\pi$ is a uniformizing element  of $\mathfrak{O}$).

\begin{lemma}\label{Basic lemma} The $\mathfrak{o}$-module
	$\mathfrak{O}$ is  free  of rank $ed$, and 
	$$\mathfrak{O}/\mathfrak{P}^r\cong  (\mathfrak{o}/\mathfrak{p})^{rd}, \quad \mbox{for all } r=1,\ldots,e.$$
(Here, $(\mathfrak{o}/\mathfrak{p})^{n}$ stands for the direct sum of $n$ copies of the $\mathfrak{o}$-module $k$.) 
\end{lemma}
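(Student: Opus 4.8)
The plan is to compute $\dim_{\mathfrak{o}/\mathfrak{p}}(\mathfrak{O}/\mathfrak{p}\mathfrak{O})$ in two different ways and read off both assertions. First I would record that $\mathfrak{O}$, being a domain that contains $\mathfrak{o}$, is torsion-free as an $\mathfrak{o}$-module; since it is also finitely generated over the principal ideal domain $\mathfrak{o}$, the structure theorem gives $\mathfrak{O}\cong\mathfrak{o}^{n}$ for some $n\geq 1$. Reducing modulo $\mathfrak{p}$ yields $\mathfrak{O}/\mathfrak{p}\mathfrak{O}\cong(\mathfrak{o}/\mathfrak{p})^{n}$, so $\dim_{\mathfrak{o}/\mathfrak{p}}(\mathfrak{O}/\mathfrak{p}\mathfrak{O})=n$, and it remains to identify $n$ with $ed$.

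The second computation uses $\mathfrak{p}\mathfrak{O}=\mathfrak{P}^{e}$. Consider the chain of $\mathfrak{o}$-submodules $\mathfrak{O}\supseteq\mathfrak{P}\supseteq\cdots\supseteq\mathfrak{P}^{e}=\mathfrak{p}\mathfrak{O}$. By the remark preceding the lemma, each successive quotient $\mathfrak{P}^{i}/\mathfrak{P}^{i+1}$ is $\mathfrak{o}$-isomorphic to $\mathfrak{O}/\mathfrak{P}$, which is a field of degree $d$ over $\mathfrak{o}/\mathfrak{p}$; in particular $\mathfrak{p}$ annihilates it and it has $\mathfrak{o}/\mathfrak{p}$-dimension $d$. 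Consequently, for every $r$ with $1\leq r\leq e$ the module $\mathfrak{O}/\mathfrak{P}^{r}$ carries a filtration with $r$ quotients each of $\mathfrak{o}/\mathfrak{p}$-dimension $d$, so $\dim_{\mathfrak{o}/\mathfrak{p}}(\mathfrak{O}/\mathfrak{P}^{r})=rd$. Taking $r=e$ and comparing with the first computation gives $n=ed$, which proves that $\mathfrak{O}$ is free of rank $ed$ over $\mathfrak{o}$.

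For the remaining claim, fix $r$ with $1\leq r\leq e$. Then $\mathfrak{p}\mathfrak{O}=\mathfrak{P}^{e}\subseteq\mathfrak{P}^{r}$, so $\mathfrak{p}$ annihilates $\mathfrak{O}/\mathfrak{P}^{r}$; hence its $\mathfrak{o}$-module structure factors through $\mathfrak{o}/\mathfrak{p}$, making $\mathfrak{O}/\mathfrak{P}^{r}$ an $\mathfrak{o}/\mathfrak{p}$-vector space. By the previous paragraph this vector space has dimension $rd$, and therefore $\mathfrak{O}/\mathfrak{P}^{r}\cong(\mathfrak{o}/\mathfrak{p})^{rd}$ as $\mathfrak{o}$-modules, as desired.

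I do not expect a serious obstacle here: the argument is essentially bookkeeping. The only points that require a little care are the appeal to the structure theorem for finitely generated modules over a PID (legitimate because $\mathfrak{O}$ is a domain, hence $\mathfrak{o}$-torsion-free) and the observation that $\mathfrak{p}$ kills both $\mathfrak{P}^{i}/\mathfrak{P}^{i+1}$ and $\mathfrak{O}/\mathfrak{P}^{r}$ for $r\leq e$ — which is precisely where the relation $\mathfrak{p}\mathfrak{O}=\mathfrak{P}^{e}$ is used.
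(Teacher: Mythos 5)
Your proof is correct and follows essentially the same route as the paper's: both arguments get $\mathfrak{O}\cong\mathfrak{o}^n$ from torsion-freeness plus the structure theorem, compute the $\mathfrak{o}$-length of $\mathfrak{O}/\mathfrak{P}^r$ as $rd$ via the filtration by powers of $\mathfrak{P}$, note that $\mathfrak{p}$ kills $\mathfrak{O}/\mathfrak{P}^r$ for $r\leq e$ so it is an $(\mathfrak{o}/\mathfrak{p})$-vector space, and take $r=e$ to identify $n=ed$. No gaps.
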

\begin{proof}

By assumption, $\mathfrak{O}$ is a torsion-free finitely generated  $\mathfrak{o}$-module, so there is an integer $n\geq 1$ such that $\mathfrak{O}$ is a direct sum of $n$ copies of  $\mathfrak{o}$.  It follows that $\mathfrak{O}/\mathfrak{p}\mathfrak{O}\cong (\mathfrak{o}/\mathfrak{p})^n$. On the other hand, as $\mathfrak{O}/\mathfrak{P}$ has degree $d$ over $\mathfrak{o}/\mathfrak{p}$, we have $\mathfrak{O}/\mathfrak{P}\cong (\mathfrak{o}/\mathfrak{p})^d$, so $\mathfrak{O}/\mathfrak{P}$ has length $d$ over $\mathfrak{o}$.  Now, as $\mathfrak{P}^{i}/\mathfrak{P}^{i+1}\cong
\mathfrak{O}/\mathfrak{P}$, the $\mathfrak{o}$-module $\mathfrak{O}/\mathfrak{P}^{r}$ has length $rd$, for all $r\geq 1$.  For $1\leq r \leq e$, we have $\mathfrak{p}\mathfrak{O}=\mathfrak{P}^e\subseteq \mathfrak{P}^r$, so $\mathfrak{O}/\mathfrak{P}^r$ is  a quotient of $\mathfrak{O}/\mathfrak{p}\mathfrak{O}\cong (\mathfrak{o}/\mathfrak{p})^n$; thus
$\mathfrak{O}/\mathfrak{P}^r\cong (\mathfrak{o}/\mathfrak{p})^{rd}$, as claimed.  Taking $r=e$, yields at once $n=ed$, which completes the proof. 
\end{proof}

Next, we shall determine the structure of the $\mathfrak{o} $-module $\mathfrak{O}/\mathfrak{P}^{n}$, for all $n\geq 1$.
\begin{proposition}\label{First passage between structures}
	Let $n$ be a positive integer, and write $n=le+r$, with $0<  r \leq e$.  Then,
	$$\mathfrak{O}/\mathfrak{P}^{n}\cong (\mathfrak{o}/\mathfrak{p}^{l+1})^{rd} \oplus (\mathfrak{o}/\mathfrak{p}^{l})^{(e-r)d}, $$
	
\end{proposition}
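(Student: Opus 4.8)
The plan is to pin down the $\mathfrak{o}$-module structure of $M=\mathfrak{O}/\mathfrak{P}^{n}$ by computing a few of its standard invariants and feeding them into the structure theorem over the discrete valuation ring $\mathfrak{o}$. Write $M\cong\bigoplus_{i}\mathfrak{o}/\mathfrak{p}^{a_i}$ with all $a_i\geq 1$ (unique up to isomorphism); the proposition amounts to showing that there are exactly $ed$ summands, that each $a_i$ is $l$ or $l+1$, and that precisely $rd$ of the $a_i$ equal $l+1$. The case $l=0$ is immediate from Lemma \ref{Basic lemma} (with the convention $\mathfrak{o}/\mathfrak{p}^{0}=0$), so I would assume $l\geq 1$ throughout.

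The computations rest on the single identity $\mathfrak{p}^{k}\mathfrak{O}=(\mathfrak{p}\mathfrak{O})^{k}=\mathfrak{P}^{ke}$, together with the inequalities $le<n\leq(l+1)e$ coming from $n=le+r$, $0<r\leq e$, which give $\mathfrak{p}^{l+1}\mathfrak{O}\subseteq\mathfrak{P}^{n}\subseteq\mathfrak{p}^{l}\mathfrak{O}=\mathfrak{P}^{le}$. From this I would extract three facts. First, $\mathfrak{p}^{l+1}M=(\mathfrak{p}^{l+1}\mathfrak{O}+\mathfrak{P}^{n})/\mathfrak{P}^{n}=0$, so every $a_i\leq l+1$. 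Second, since $\mathfrak{P}^{n}\subseteq\mathfrak{P}^{le}$ we get $M/\mathfrak{p}^{l}M=\mathfrak{O}/\mathfrak{P}^{le}=\mathfrak{O}/\mathfrak{p}^{l}\mathfrak{O}\cong(\mathfrak{o}/\mathfrak{p}^{l})^{ed}$, the last isomorphism because $\mathfrak{O}$ is $\mathfrak{o}$-free of rank $ed$ by Lemma \ref{Basic lemma}; comparing this with $M/\mathfrak{p}^{l}M\cong\bigoplus_{i}\mathfrak{o}/\mathfrak{p}^{\min(a_i,l)}$ and using uniqueness of the decomposition forces the number of summands to be $ed$ and $\min(a_i,l)=l$, i.e. $a_i\geq l$, for every $i$. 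Third, $\mathfrak{p}^{l}M=\mathfrak{P}^{le}/\mathfrak{P}^{n}\cong\mathfrak{O}/\mathfrak{P}^{r}\cong(\mathfrak{o}/\mathfrak{p})^{rd}$, using the isomorphism $\mathfrak{P}^{i}/\mathfrak{P}^{i+j}\cong\mathfrak{O}/\mathfrak{P}^{j}$ recorded before Lemma \ref{Basic lemma} and then Lemma \ref{Basic lemma} itself (legitimate as $1\leq r\leq e$); on the other hand $\mathfrak{p}^{l}M\cong\bigoplus_{i}\mathfrak{p}^{l}(\mathfrak{o}/\mathfrak{p}^{a_i})$ is a direct sum of $\#\{i:a_i=l+1\}$ copies of $\mathfrak{o}/\mathfrak{p}$, so exactly $rd$ of the $a_i$ equal $l+1$.

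Assembling the three facts: each $a_i\in\{l,l+1\}$, there are $ed$ summands, and $rd$ of them equal $l+1$, hence the remaining $(e-r)d$ equal $l$, which is precisely the claimed decomposition. The only step that genuinely needs thought is the second one, namely ruling out cyclic summands $\mathfrak{o}/\mathfrak{p}^{a}$ with $a<l$, and I expect it to be the main obstacle; an equivalent device would be an $\mathfrak{o}$-length count, $\mathrm{length}_{\mathfrak{o}}(M)=nd$ (read off from the filtration of $\mathfrak{O}/\mathfrak{P}^{n}$ by powers of $\mathfrak{P}$, whose successive quotients are copies of $\mathfrak{O}/\mathfrak{P}$ of length $d$), combined with the first and third facts, which again forces the lower invariants to be as large as possible. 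Everything else is routine manipulation of the ideals $\mathfrak{P}^{k}$ and $\mathfrak{p}^{k}\mathfrak{O}$ and two invocations of Lemma \ref{Basic lemma}.
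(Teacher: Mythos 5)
Your proposal is correct and follows essentially the same route as the paper: both decompose $\mathfrak{O}/\mathfrak{P}^{n}$ as $\bigoplus_i \mathfrak{o}/\mathfrak{p}^{a_i}$ and then pin down the exponents by computing $\mathfrak{p}^{l}M=\mathfrak{P}^{le}/\mathfrak{P}^{n}\cong(\mathfrak{o}/\mathfrak{p})^{rd}$ and $M/\mathfrak{p}^{l}M\cong\mathfrak{O}/\mathfrak{p}^{l}\mathfrak{O}\cong(\mathfrak{o}/\mathfrak{p}^{l})^{ed}$ via Lemma \ref{Basic lemma}. Your ``Fact 1'' is actually already implied by your Fact 3 (since $\mathfrak{p}^{l}M\cong(\mathfrak{o}/\mathfrak{p})^{rd}$ forces $a_i-l\leq 1$), but this is a harmless redundancy.
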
    
\begin{proof}
Firstly, if $l=0$, then $n=r\leq e$, so by Lemma \ref{Basic lemma}, $\mathfrak{O}/\mathfrak{P}^{n}\cong (\mathfrak{o}/\mathfrak{p})^{rd} $, as claimed.
	
	Secondly, if  $n=le$,  then $\mathfrak{P}^{n}=(\mathfrak{p}\mathfrak{O})^l$; so $\mathfrak{O}/\mathfrak{P}^{n}=\mathfrak{O}/\mathfrak{p}^l\mathfrak{O}$, and we have   $\mathfrak{O}/\mathfrak{p}^l\mathfrak{O}\cong ({\mathfrak{o}}/\mathfrak{p}^l)^{ed}$, by the first part of Lemma \ref{Basic lemma}; hence, the result holds in that case.
	
	For the general case, write
	$$\mathfrak{O}/\mathfrak{P}^{n}\cong \mathfrak{o}/\mathfrak{p}^{m_1} \oplus \mathfrak{o}/\mathfrak{p}^{m_2} \oplus \cdots \oplus \mathfrak{o}/\mathfrak{p}^{m_{t}},$$
	with  $m_1\geq m_2 \geq \ldots \geq m_{t}\geq 1$.  We have $\mathfrak{p}^l(\mathfrak{O}/\mathfrak{P}^{n})=\mathfrak{P}^{el}/\mathfrak{P}^{n}$, which is isomorphic to $\mathfrak{O}/\mathfrak{P}^{r}$; so $\mathfrak{p}^l(\mathfrak{O}/\mathfrak{P}^{n})\cong (\mathfrak{o}/\mathfrak{p})^{rd}$,  by the first case.  On the other hand, we have
	$$\mathfrak{p}^l( \bigoplus_{i=1}^{t} \mathfrak{o}/\mathfrak{p}^{m_i})= \bigoplus_{i=1}^{t} \mathfrak{p}^{\min\{l,m_i\}}/\mathfrak{p}^{m_i}\cong \bigoplus_{i=1}^{t} \mathfrak{o}/\mathfrak{p}^{\max\{m_i-l,0\}}. $$
	Thus, $m_i-l=1$ for $i=1,\ldots,rd$; and $m_i\leq l$ for $i=rd+1,\ldots, t$. 
	Moreover, the quotient of  $\mathfrak{O}/\mathfrak{P}^{n}$ by $\mathfrak{p}^l(\mathfrak{O}/\mathfrak{P}^{n})$ is clearly isomorphic to $\mathfrak{O}/\mathfrak{p}^{l}\mathfrak{O}=\mathfrak{O}/\mathfrak{P}^{el}$; so  by the second case, $\mathfrak{O}/\mathfrak{p}^{l}\mathfrak{O}\cong (\mathfrak{o}/\mathfrak{p}^l)^{ed}$.   
	Therefore, 
	$$\bigoplus_{i=1}^{t} \mathfrak{o}/\mathfrak{p}^{\min\{l,m_i\}}\cong (\mathfrak{o}/\mathfrak{p}^l)^{ed}.$$
	Thus, $t=ed$, and $m_i=l$ for all $i=rd+1,\ldots,ed$; the result follows.  
\end{proof}

To complete the proof of Theorem \ref{Main 1}, let $M$ be an object of   $\mathcal{F}_{\mathfrak{O}}$, and $\lambda$ be its corresponding partition.   Assume that $l$ is the largest integer for which $\lambda_{e,l}$ is non-empty, and let $n_1\geq \ldots\geq n_a$ be the components of $\lambda_{e,l}$.  Let $j$ be the largest index for which $n_1=\ldots=n_j=le$; so $n_i=(l-1)e+r_i$, with $0< r_i< e$, for  $i=j+1,\ldots,a$.
.  By Proposition \ref{First passage between structures},  the components of $\lambda_{e,l}$ yield a direct factor of  $\rho_*M$  of the form
$$[(\mathfrak{o}/{\mathfrak{p}^l})^{je}\oplus ((\mathfrak{o}/{\mathfrak{p}^l})^{\sum_{i=j+1}^{a}r_i} \oplus (\mathfrak{o}/\mathfrak{p}^{l-1})^{(a-j)e-\sum_{i=j+1}^{a}r_i})]^d.$$
Now, observe that $je+\sum_{i=j+1}^{a}r_i$ is nothing but the weight $w_{e,l}(\lambda)$, and $a$ is just the length of $\lambda_{e,l}$; so the above  can be written as
$$[(\mathfrak{o}/\mathfrak{p}^l)^{w_{e,l}(\lambda)}\oplus  (\mathfrak{o}/\mathfrak{p}^{l-1})^{\ell(\lambda_{e,l})e-w_{e,l}(\lambda)}]^d.$$
Similarly, $\lambda_{e,l-1}$ induces a direct factor of $\rho_*M$  of the form
$$[(\mathfrak{o}/\mathfrak{p}^{l-1})^{w_{e,l-1}(\lambda)}\oplus  (\mathfrak{o}/\mathfrak{p}^{l-2})^{\ell(\lambda_{e,l-1})e-w_{e,l-1}(\lambda)}]^d.$$
It follows that the number of copies of $\mathfrak{o}/\mathfrak{p}^{l-1} $ involved in the canonical decomposition of $\rho_*M$  is equal to
$f_{e,l-1}(\lambda) d$.  Also, the number of copies of $\mathfrak{o}/\mathfrak{p}^{l}$ involved in  $\rho_*M$ is $w_{e,l}(\lambda)d$, which coincides with $f_{e,l}(\lambda)d$ because
$\ell(\lambda_{e,l+1})=w_{e,l+1}(\lambda)=0$. Arguing in the same way with  $i=l-2,\ldots,1$,  one sees that the number of copies of $\mathfrak{o}/\mathfrak{p}^{i} $ involved in  $\rho_*M$  is equal to $f_{e,i}(\lambda) d$, which concludes the proof.

\section{ Proof of Theorem \ref{Main 2}}\label{p-Groups}
For $a\in A$, the element $a\cdot (x-1)$  coincides clearly with the commutator  $[a,x]$ in $H$.  We claim that
	\begin{equation}\label{Lower series of the cyclotomoc group}
\gamma_{n+1}(H)=A\cdot (x-1)^n,\quad \mbox{for all } n\geq 1.
\end{equation}
The inclusion $A\cdot (x-1)^n \subseteq \gamma_{n+1}(H)$ is obvious.  For the reverse inclusion  we proceed by induction on $n$.  First, note that every  $h\in H$ can be written as $h=bx^{i}$, for some $b\in A$ and some integer $i\geq 1$. Using the fact that $(x^i-1)=(1+\cdots+x^{i-1})(x-1)$, it follows that
$$[a,h]=[a,x^i]=a\cdot (x^i-1) \in [A,H],$$
that is  $[A,H]\leq  A\cdot (x-1)$. Now, since 
$A/[A,H]$ is central in $H/[A,H]$ and $H/A$ is cyclic, the quotient $H/[A,H]$ is abelian; so $\gamma_2(H)\leq  [A,H]$. This proves the claim for $n=1$. Next,  assume (\ref{Lower series of the cyclotomoc group}) holds for some $n\geq 1$, and let    $a\in \gamma_{n+1}(H)$.  As above, we may write $[a,h]=a'\cdot(x-1)$, with $a'\in \gamma_{n+1}(H)$.  By assumption,  $a'\in A\cdot (x-1)^n$,  so $[a,h]\in A\cdot(x-1)^{n+1}$, which proves that  $\gamma_{n+2}(H)\subseteq  A\cdot (x-1)^{n+1}$; the claim follows.

Recall that the  $\mathfrak{O}$-module structure on
$A$ is defined by $a\cdot f(\pi)=a\cdot f(x-1)$, for $a\in A$ and $f\in \mathbb{Z}_p[X]$;  whence $A\cdot (x-1)^{n}=A\cdot \pi^{n}=A\cdot \mathfrak{P}^{n}$,
 and by (\ref{Lower series of the cyclotomoc group}), $\gamma_{n+1}(H)=A\cdot \mathfrak{P}^{n}$, for all  $n\geq 1$; which proves (i).

By assumption, the $\mathfrak{O}$-module $A$ satisfies 
$$A\cong \bigoplus_{i=1}^{s} \mathfrak{O}/\mathfrak{P}^{n_i},$$
and since $A\neq 0$, it follows that $n_1$ is the smallest positive  integer such that $A\cdot \mathfrak{P}^{n_1}=0$; so by (i),  $H$ has nilpotency class $n_1$.  Moreover, if we write $n_1=l(p-1)+r$, with $0< r \leq p-1$;  then  by Corollary \ref{Structure of om as abelian groups} we have
	$$\mathfrak{O}/\mathfrak{P}^{n_1}\cong (\mathbb{Z}/p^{l+1}\mathbb{Z})^{r} \oplus (\mathbb{Z}/p^{l}\mathbb{Z})^{p-r-1}. $$
Plainly, $\mathbb{Z}/p^{l+1}\mathbb{Z}$ is a maximal cyclic $p$-group involved in the  decomposition of $A$ as an abelian group; thus $A$ has exponent $p^{l+1}$, and obviously $l+1=\lceil \dfrac{n_1}{p-1} \rceil$.  This proves (ii).  

For (iii), observe that for every  $n\geq 1$ we have  $p^n \mathfrak{O}=\mathfrak{P}^{n(p-1)}$; so in additive notation,  $p^nA=A\cdot p^n\mathfrak{O}=A\cdot \mathfrak{P}^{n(p-1)}$;  the result is immediate now by  (i).  

Next, observe that for every integer $j\geq 2$, we have
$$\gamma_j(H)=A\cdot \mathfrak{P}^{j-1}\cong \bigoplus_{i=1}^{s} \mathfrak{P}^{\min\{n_i,j-1\}}/\mathfrak{P}^{n_i}.$$
Therefore,
$$\gamma_j(H)/\gamma_{j+1}(H)\cong \bigoplus_{i=1}^{s} \mathfrak{P}^{\min\{n_i,j-1\}}/\mathfrak{P}^{\min\{n_i,j\}}.$$
For $n_i\leq j-1$, the quotient $\mathfrak{P}^{\min\{n_i,j-1\}}/\mathfrak{P}^{\min\{n_i,j-1\}}$ vanishes; hence
$$\gamma_j(H)/\gamma_{j+1}(H)\cong \bigoplus_{ n_i\geq j } \mathfrak{P}^{j-1}/\mathfrak{P}^{j}\cong\bigoplus_{n_i\geq j }\mathfrak{O}/\mathfrak{P}.$$
Now, as $\mathfrak{O}/\mathfrak{P}$ is a cyclic group of order $p$,  $\gamma_j(H)/\gamma_{j+1}(H)$ is elementary abelian, and its rank $d_j$ is equal to the cardinality of the set $\{i\mid n_i\geq j\}$. Also, we have $A/\gamma_{2}(H)\cong \bigoplus_{i=1}^{s} \mathfrak{O}/\mathfrak{P}$,  so 
$A/\gamma_{2}(H)\cong (\mathbb{Z}/p\mathbb{Z})^{s}$. Thus, $A/\gamma_{2}(H)$ is elementary abelian of rank $d_1=\ell(\lambda)$.  Moreover, 
$$\vert A:\gamma_{p}(H)\vert =\vert A:\gamma_{2}(H)\vert \prod_{j=2}^{p-1} \vert \gamma_{j}(H):\gamma_{j+1}(H)\vert;$$  
it follows from (iii) and (iv) that
$p^{d(A)}=p^{\sum_{i= 1}^{p-1} d_i}$, which proves (iv), and completes the proof of Theorem \ref{Main 2}.

\section{Cohomology and non-inner automorphisms}\label{Cohom and p-Aut}

In this section, $Q$ denotes a finite $p$-group, and $A$  a $Q$-module which is also a finite $p$-group.  We shall denote by $\hat{H}^{n}(Q,A)$, $n\in \mathbb{Z}$,  the Tate cohomology groups of $Q$ with coefficients in $A$  (cf. e.g., \cite[Chap. VIII, \S1]{Serre}). Note that $$\hat{H}^{0}(Q,A)=A^{Q}/A^{\tau}, \quad \mbox{and } \quad \hat{H}^{-1}(Q,A)=\ker \tau/[A,Q];$$
 where $A^{Q}$ is the submodule of  fixed points in $A$, and $\tau:A\to A$ is the norm (or trace) map induced by $Q$ (that is,  $a^{\tau}=\sum_{x\in Q}a\cdot x$, for all $a\in A$).

Recall that the $Q$-module $A$ is  said to be  \textit{cohomologically trivial}  if  $\hat{H}^n(S,A)$ vanishes, for all $S\leq Q$  and $n\in \mathbb{Z}$.  By a result of   Gasch\"{u}tz \cite{Gasch1965}, for $A$ to be  cohomologically trivial, it suffices that $\hat{H}^n(Q,A)=0$ for some  $n\in \mathbb{Z}$. 
Note that the latter result was established independently by Uchida \cite{Uchida}; we shall refer to it as the Gasch\"{u}tz-Uchida theorem.

The following result, due to P. Schmid (see \cite[Proposition 1]{Schmid}), is needed to prove Theorem \ref{The cohomological auxilliary result}.

\begin{lemma}\label{Schmidcentralizer}
	If $A\neq 0$ is  cohomologically trivial as  $Q$-module, then for every non-trivial subgroup $H$ of $Q$, we have $C_Q(A^H)=H$. 
\end{lemma}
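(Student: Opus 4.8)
The plan is to prove the inclusion $C_Q(A^H)\subseteq H$ (the reverse one, $H\subseteq C_Q(A^H)$, being immediate since $H$ fixes $A^H$ elementwise by definition), and to do so by contradiction. So suppose $H\subsetneq C_Q(A^H)$. Since $Q$ is a finite $p$-group, so is $C_Q(A^H)$, and a proper subgroup of a finite $p$-group is properly contained in its normalizer; hence $H\subsetneq N_{C_Q(A^H)}(H)$, and I may pick $x\in N_{C_Q(A^H)}(H)\setminus H$. Set $K=H\langle x\rangle$. Then $H\trianglelefteq K$ (because $x$ normalizes $H$), the quotient $K/H=\langle xH\rangle$ is a non-trivial cyclic $p$-group, say of order $p^k$ with $k\ge 1$, and $x$ centralizes $A^H$ by the choice of $x$.

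Two preliminary observations. First, since $H$ fixes $A^H$ elementwise and $x$ does too, the whole group $K=\langle H,x\rangle$ fixes $A^H$ elementwise, so $A^H\subseteq A^K$; the reverse inclusion is trivial, whence $A^K=A^H$. Second, $H\ne 1$ is a finite $p$-group acting on the non-zero finite $p$-group $A$, so $A^H\ne 0$ (the orbit decomposition gives $|A^H|\equiv|A|\equiv 0\pmod p$). Consequently $A^K=A^H\ne 0$; this is the fact I will contradict.

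The key step is a computation of the norm (trace) map $\tau_K\colon A\to A$, $\tau_K(a)=\sum_{g\in K}a\cdot g$, in terms of the norm map $\tau_H$ of $H$. Decomposing $K$ into right cosets $K=\bigsqcup_{i=0}^{p^k-1}Hx^i$ (legitimate since $K/H$ is cyclic, generated by $xH$) and using $a\cdot(hx^i)=(a\cdot h)\cdot x^i$, I get for every $a\in A$
\[
\tau_K(a)=\sum_{i=0}^{p^k-1}\Bigl(\sum_{h\in H}a\cdot h\Bigr)\cdot x^i=\sum_{i=0}^{p^k-1}\tau_H(a)\cdot x^i.
\]
Now $\tau_H(a)\in A^H$, and $x$ — hence every $x^i$ — acts trivially on $A^H$ because $x\in C_Q(A^H)$; so each summand equals $\tau_H(a)$ and the sum collapses to $\tau_K(a)=p^k\,\tau_H(a)$. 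In particular $\tau_K(A)=p^k\,\tau_H(A)\subseteq p\,A^H=p\,A^K$.

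To finish: $A$ is cohomologically trivial over $Q$ and $K\le Q$, so $\hat{H}^{0}(K,A)=A^K/\tau_K(A)=0$, i.e. $A^K=\tau_K(A)\subseteq pA^K$. Since $A^K$ is a finite abelian $p$-group, $A^K=pA^K$ forces $A^K=0$ by Nakayama's lemma, contradicting $A^K=A^H\ne 0$; hence $C_Q(A^H)=H$. The main obstacle is precisely the identity $\tau_K=p^k\tau_H$ on $A$: the construction is arranged so that the extra element $x$ lies in $N_Q(H)\cap C_Q(A^H)$, which is exactly what makes the norm over $K$ factor as the norm over $H$ followed by multiplication by the index $[K:H]$ (a multiple of $p$). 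Everything else is routine — the normalizer-growth property of finite $p$-groups to produce $x$, the equality $A^K=A^H$, the non-vanishing of fixed points of a $p$-group on a non-zero $p$-group, and Nakayama's lemma — and it is worth noting that only the single instance $\hat{H}^{0}(K,A)=0$ of the cohomological-triviality hypothesis is used, not the Gasch\"{u}tz--Uchida theorem.
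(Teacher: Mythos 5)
Your proof is correct. Note that the paper itself offers no proof of this lemma: it is quoted from P.~Schmid (\emph{Math.\ Z.} \textbf{175} (1980), Proposition~1), so there is no in-paper argument to compare against. Your reconstruction is sound and is essentially the standard one: the normalizer-growth property of finite $p$-groups supplies an $x\in N_{C_Q(A^H)}(H)\setminus H$, which is exactly what is needed to make $H$ normal in $K=H\langle x\rangle$ with coset representatives $x^i$ that centralize $A^H$, so that the norm factors as $\tau_K=[K:H]\,\tau_H$ on $A$; combined with $\hat{H}^0(K,A)=0$ and $A^K=A^H\neq 0$ (nontrivial fixed points of a $p$-group on a nonzero finite $p$-module), this gives the contradiction $A^K=pA^K$. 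Every step checks out, including the two points that are easy to get wrong: the need for $x$ to normalize $H$ (an arbitrary $x\in C_Q(A^H)\setminus H$ would not give coset representatives centralizing $A^H$), and the fact that $\hat{H}^0(K,A)=0$ is directly part of the cohomological-triviality hypothesis for the subgroup $K\leq Q$, so Gasch\"utz--Uchida is indeed not needed.
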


\subsection*{Proof of Theorem \ref{The cohomological auxilliary result}}

Let $x\in Q$ be of order $p$, and let $B=A\cdot (x-1)$. Clearly, $B\cdot (1+\cdots+x^{p-1})=0$, so we can view $B$ as a module over $\mathfrak{O}=\mathbb{Z}_p[X]/(g)$.  Write
	$$B\cong \bigoplus_{i=1}^{s} \mathfrak{O}/\mathfrak{P}^{n_i}.$$
 	If $n_1\geq p-1$, then by Corollary \ref{Structure of om as abelian groups}, $\mathfrak{O}/\mathfrak{P}^{n_1}$ yields an abelian $p$-group of rank $p-1$;  whence $A$ cannot be generated by less than $p-1$ elements, a contradiction.  This shows that $n_1\leq p-2$.  Now,  by Theorem \ref{Main 2}(ii) we have $B\cdot (x-1)^{p-2}=0$   and $pB=0$, that is to say $A\cdot (x-1)^{p-1}=0$ and $pA\cdot (x-1)=0$.  Consider the norm map $\tau_x:A\to A$  induced by $\langle x\rangle$ (that is $a^{\tau_x}= a\cdot (1+\ldots+x^{p-1})$, for all $a\in A$).  Using the  identity
 	$$1+X+\ldots+X^{p-1}=\sum_{i= 1}^{p}\binom{p}{i}(X-1)^{i-1},$$
 	it follows that  $$a^{ \tau_x}= \sum_{i= 1}^{p}\binom{p}{i}a\cdot (x-1)^{i-1}=pa.$$  
 Assume for a contradiction that $A$ is cohomologically trivial. In particular, $\hat{H}^{0}(\langle x\rangle ,A)=A^{\langle x\rangle}/A^{\tau_x}=0$; subsequently $A^{\langle x\rangle}=A^{\tau_x}=pA$, and by
 Lemma \ref{Schmidcentralizer} we have $C_Q(pA)=\langle x \rangle$.    If $y\in Q$ is another elements of order $p$, it follows similarly that $C_Q(pA)=\langle y \rangle$; consequently $Q$ has a unique subgroup of order $p$, so $Q$ is either cyclic or a generalized quaternion.
 If $p=2$, then $A=0$, which is not the case; so we may assume  $p>2$, and 
 $Q$ is then cyclic,  a contradiction.  Theorem \ref{The cohomological auxilliary result} follows.

\subsection*{$1$-Cocyles and automorphisms}

Let  $\delta: Q\to A$ be a $1$-cocycle, i.e., a map satisfying $\delta(xy)=\delta(x)\cdot y+\delta(y)$, for all $x,y\in Q$.    Given $x\in G$, we shall denote by $A_{\delta,x}$  the subgroup  of $A$ generated by the elements $\delta(x^n)$,  $n\geq0$.  It is readily seen that 
$$\delta(x^n)=\delta(x)\cdot (1+x+\cdots+x^{n-1}), \quad \mbox{for all } n\geq 1.$$
It follows in particular that $A_{\delta,x}$ is  the submodule of $A$ (viewed as an $\langle x\rangle$-module)  generated by $\delta(x)$.  Furthermore,  
if $x^p=1$, then $A_{\delta,x}$ can be viewed as a module over $\mathfrak{O}=\mathbb{Z}_p[X]/(g)$.  

\begin{lemma}\label{The structure of the modules generated by image of derivation}
With the above notations, the $\mathfrak{O}$-module 
$A_{\delta,x}$ satisfies
$$A_{\delta,x}\cong \mathfrak{O}/\mathfrak{P}^{n}, \quad \mbox{ for some integer } n\geq 0.$$
  In particular, $A_{\delta,x}$ is elementary abelian if and only if $n\leq p-1$.
\end{lemma}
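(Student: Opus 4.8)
The plan is to exploit the fact that $A_{\delta,x}$ is a cyclic $\mathfrak{O}$-module together with the structure theorem for finitely generated torsion modules over the discrete valuation ring $\mathfrak{O}$. Since $A_{\delta,x}$ is the $\langle x\rangle$-submodule (equivalently, the $\mathfrak{O}$-submodule, using $x^p=1$ and fact $(*)$) generated by the single element $\delta(x)$, it is a cyclic $\mathfrak{O}$-module, hence isomorphic to $\mathfrak{O}/\mathfrak{I}$ for some ideal $\mathfrak{I}$ of $\mathfrak{O}$. Because $A$ is a finite $p$-group, $A_{\delta,x}$ is finite, so $\mathfrak{I}\neq 0$; and since the ideals of the discrete valuation ring $\mathfrak{O}$ are exactly the powers $\mathfrak{P}^n$ of the maximal ideal (together with $0$ and $\mathfrak{O}$ itself), we get $A_{\delta,x}\cong \mathfrak{O}/\mathfrak{P}^n$ for some integer $n\geq 0$ (with $n=0$ giving the trivial module, which covers the case $\delta(x)=0$).

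For the ``in particular'' clause, I would invoke Corollary \ref{Structure of om as abelian groups} with $\lambda=(n)$: it gives that $\mathfrak{O}/\mathfrak{P}^n$, viewed as an abelian group, is $(\mathbb{Z}/p^{l+1}\mathbb{Z})^r\oplus(\mathbb{Z}/p^l\mathbb{Z})^{p-1-r}$ where $n=l(p-1)+r$ with $0<r\leq p-1$ (for $n\geq 1$). This is elementary abelian precisely when $l=0$, i.e. when $n=r\leq p-1$; and the cases $n=0$ (trivial) and $n\leq p-1$ are handled directly, noting that for $1\leq n\leq p-1$ one has $l=0$ so the group is $(\mathbb{Z}/p\mathbb{Z})^n$. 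Conversely, if $n\geq p$ then $l\geq 1$, so there is a cyclic summand of order $p^{l+1}\geq p^2$, and the group is not elementary abelian. This gives the stated equivalence.

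The only genuinely substantive point is the verification that $A_{\delta,x}$ is genuinely a cyclic $\mathfrak{O}$-module generated by $\delta(x)$ — but this is exactly what the displayed identity $\delta(x^n)=\delta(x)\cdot(1+x+\cdots+x^{n-1})$ established just before the lemma gives, since each $1+x+\cdots+x^{n-1}$ lies in $\mathbb{Z}_p[\langle x\rangle]$ and hence acts through $\mathfrak{O}$. So there is no real obstacle: the result is essentially a packaging of the structure theory already developed, and the proof should be short. The mild care needed is just to treat $n=0$ uniformly (when $\delta(x)$ has finite additive order but generates a trivial $\mathfrak{O}$-module only if $\delta(x)=0$; more precisely one should say $A_{\delta,x}\cong\mathfrak{O}/\mathfrak{P}^n$ where $\mathfrak{P}^n$ is the annihilator ideal of $\delta(x)$ in $\mathfrak{O}$, with the convention $\mathfrak{P}^0=\mathfrak{O}$).
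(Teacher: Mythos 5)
Your argument is correct and is essentially the paper's own proof: the paper likewise observes that $\alpha\mapsto\delta(x)\cdot\alpha$ is a surjection $\mathfrak{O}\to A_{\delta,x}$ whose kernel, being a non-zero ideal of the discrete valuation ring $\mathfrak{O}$, equals $\mathfrak{P}^{n}$, and then deduces the elementary-abelian criterion from Corollary \ref{Structure of om as abelian groups}. Your extra care with the $n=0$ convention is a harmless refinement of the same argument.
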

\begin{proof}
 The  $\mathfrak{O}$-linear map  $\alpha\mapsto \delta(x)\cdot \alpha$, from $\mathfrak{O}$ to $A_{\delta,x}$, is onto since $A_{\delta,x}$ is generated by $\delta(x)$ .   Moreover, the kernel of this map is a non-zero ideal of $\mathfrak{O}$ (as $A_{\delta,x}$ is finite), so it has the form $\mathfrak{P}^{n}$ for some integer $n\geq 0$, which proves the first assertion. The second assertion is immediate by Corollary \ref{Structure of om as abelian groups}.  
\end{proof}

Assume $A$ is a normal abelian subgroup of $p$-group $G$, and $Q=G/C_G(A)$.  Then $A$ is a $Q$-module under the action  defined by $a\cdot \bar{g} =a^{g}$, for $a\in A$ and  $g\in G$ (of course, $\bar{g}$ denotes the canonical image of $g$ in $Q$).  Let $Z^1(Q,A)$  denote  the abelian group formed by the $1$-cocycles  $\delta:Q\rightarrow A$. 
Recall that an element $\delta\in Z^1(Q,A)$ is called a \textit{$1$-cobord} if there exists $a\in A$ such that $\delta(x)=a(x-1)$, for all $x\in Q$; these $1$-cobords form a subgroup $B^1(Q,A)$ of $Z^1(Q,A)$, and the quotient $Z^1(Q,A)/B^1(Q,A)$ coincides with the first cohomology group $\hat{H}^1(Q,A)$.  

For every $\delta\in Z^1(G,A)$, we can define an automorphism   $\varphi_{\delta}:G\to G$ by
$$
g^{\varphi_{\delta}}=g\delta(\bar{g}), \quad \mbox{for all } g\in G.  
$$
The map $\delta \mapsto \varphi_{\delta}$  defines clearly a one-to-one group homomorphism, and its  image is equal to the subgroup $\mathrm{Aut}(G;A)$ formed by the automorphisms $\sigma $ of $G$ which act trivially on  $C_G(A)$ and $G/A$ (that is to say  $x^{\sigma}=x$ for all $x\in C_G(A)$, and $g^{-1}g^{\sigma}\in A$ for all $g\in G$).  Write $\mathrm{Inn}(G;A)$ for the inner automorphisms of $G$ lying in $\mathrm{Aut}(G;A)$.  Using the multiplicative notation, a $1$-cobord $\delta$ induced by some $a\in A$ may be written as  $\delta(\bar{g})=a^g a^{-1}$, for all $g\in G$; thus $\varphi_{\delta}$ is the inner automorphism of $G$ induced by $a^{-1}$.  This shows in particular that $B^1(Q,A)^{\varphi}\subseteq \mathrm{Inn}(G;A)$.  
\begin{lemma}\label{From derivations to automorphisms}
Under the above notations, the equality $B^1(Q,A)^{\varphi}= \mathrm{Inn}(G;A)$ holds if and only if  $A=C_G(C_G(A))\cap Z_G(A)$, where $Z_G(A)$ is the subgroup satisfying 
	$Z_G(A)/A=Z(G/A)$.  In this case, $\varphi$ induces an isomorphism from $\hat{H}^1(Q,A)$ onto $ \mathrm{Aut}(G;A)/ \mathrm{Inn}(G;A)$.  
\end{lemma}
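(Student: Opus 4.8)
The plan is to realise both $B^{1}(Q,A)^{\varphi}$ and $\mathrm{Inn}(G;A)$ as subgroups of $\mathrm{Inn}(G)\cong G/Z(G)$ and to compare them directly. Write $\iota_{g}$ for the inner automorphism of $G$ induced by $g\in G$. The first step is to decide which $\iota_{g}$ lie in $\mathrm{Aut}(G;A)$. By definition this asks that $\iota_{g}$ fix $C_{G}(A)$ pointwise and act trivially on $G/A$; the first condition says precisely $g\in C_{G}(C_{G}(A))$, while the second says $[G,g]\subseteq A$, i.e. $gA\in Z(G/A)$, i.e. $g\in Z_{G}(A)$. Hence $\mathrm{Inn}(G;A)=\{\iota_{g}:g\in K\}$ with $K:=C_{G}(C_{G}(A))\cap Z_{G}(A)$, and since $Z(G)\le K$ this is the image $K/Z(G)$ of $K$ in $\mathrm{Inn}(G)=G/Z(G)$.

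Next I would use the description recalled just above the statement: the $1$-cobord attached to $a\in A$ maps under $\varphi$ to $\iota_{a^{-1}}$, so $B^{1}(Q,A)^{\varphi}=\{\iota_{a}:a\in A\}$, which as a subgroup of $G/Z(G)$ is the image $AZ(G)/Z(G)$ of $A$. Because $A$ is abelian we have $A\le C_{G}(A)$, hence $A\le C_{G}(C_{G}(A))$, and trivially $A\le Z_{G}(A)$, so $A\le K$ — which recovers the inclusion $B^{1}(Q,A)^{\varphi}\le\mathrm{Inn}(G;A)$. Comparing the two images in $G/Z(G)$, the equality $B^{1}(Q,A)^{\varphi}=\mathrm{Inn}(G;A)$ holds if and only if $AZ(G)=K$; as $A\le K$ and $Z(G)\le K$ always, this amounts to the single inclusion $K\subseteq AZ(G)$, and (noting $Z(G)\le A$) it is exactly the asserted condition $A=C_{G}(C_{G}(A))\cap Z_{G}(A)$.

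For the final clause I would argue formally: $\varphi$ is an isomorphism from $Z^{1}(Q,A)$ onto $\mathrm{Aut}(G;A)$, and under the present hypothesis it carries the subgroup $B^{1}(Q,A)$ onto $\mathrm{Inn}(G;A)$; hence it induces an isomorphism $Z^{1}(Q,A)/B^{1}(Q,A)\cong\mathrm{Aut}(G;A)/\mathrm{Inn}(G;A)$, and the left-hand side is $\hat{H}^{1}(Q,A)$ as already observed. The only substantive point is the equivalence $\iota_{g}\in\mathrm{Aut}(G;A)\Longleftrightarrow g\in C_{G}(C_{G}(A))\cap Z_{G}(A)$, obtained by unwinding the two defining properties of $\mathrm{Aut}(G;A)$; everything else is bookkeeping inside $G/Z(G)$. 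The delicate point I expect to need care is the passage between ``$AZ(G)=K$'' and ``$A=K$'', i.e. controlling the position of $Z(G)$ relative to $A$.
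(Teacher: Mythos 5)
Your route is the natural one (the paper leaves this proof to the reader, so there is no paper proof to compare against), and most of it is correct: $\iota_g\in\mathrm{Aut}(G;A)$ if and only if $g\in K:=C_G(C_G(A))\cap Z_G(A)$, the image of $B^1(Q,A)$ under $\varphi$ is $\{\iota_a: a\in A\}$, and hence the equality $B^1(Q,A)^{\varphi}=\mathrm{Inn}(G;A)$ is equivalent to $AZ(G)=K$. The last clause, obtained by applying the isomorphism theorem to $\varphi\colon Z^1(Q,A)\to\mathrm{Aut}(G;A)$, is also fine.

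The genuine gap is precisely the point you flag at the end: the parenthetical ``noting $Z(G)\le A$'' has no justification --- the hypotheses only say that $A$ is a normal abelian subgroup of the $p$-group $G$, and $Z(G)\le A$ can fail. Without it, $AZ(G)=K$ and $A=K$ are not equivalent, and the ``only if'' direction of the lemma as stated is in fact false: for $G=D_8\times C_2$ and $A$ the cyclic subgroup of order $4$ in the $D_8$ factor, one has $C_G(A)=C_G(C_G(A))=A\times C_2$ and $Z_G(A)=G$, so $K=A\times C_2=AZ(G)$ and $B^1(Q,A)^{\varphi}=\mathrm{Inn}(G;A)$ (the central $C_2$ induces trivial inner automorphisms), yet $A\neq K$. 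What survives from your argument with no extra hypothesis is (a) the equivalence with the corrected condition $AZ(G)=C_G(C_G(A))\cap Z_G(A)$, and (b) the ``if'' direction of the stated lemma, since $A=K$ together with $Z(G)\le K$ forces $Z(G)\le A$ and therefore $AZ(G)=A=K$. So you should either add the hypothesis $Z(G)\le A$ --- which does hold in the paper's application, where $C_G(A)=\Phi(G)$ gives $Z(G)\le\Phi(G)$ and hence $Z(G)\le Z(\Phi(G))=A$ --- or restate the criterion with $AZ(G)$ in place of $A$. Only the ``if'' direction is used in the proof of Theorem \ref{App to the noninner Conj}, so nothing downstream is affected, but as written the step from $AZ(G)=K$ to $A=K$ is a real hole in your proof (and a defect of the statement itself).
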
   
The above result is well-known in the literature (it is usually formulated as $C_G(C_G(A))=A$ is a sufficient condition for $\varphi $ to apply $B^1(Q,A)$  onto  $\mathrm{Inn}(G;A)$); the proof is straightforward and is left to the reader.

For the  proof of Theorem  \ref{App to the noninner Conj}, we need a result by Deaconescu and Silberberg \cite{Deac}, asserting  that  the non-inner conjecture  holds in the case where $C_G(Z(\Phi(G)))\neq \Phi(G)$ (a slight improvement of the latter can be found in Ghoraishi \cite{Gho2014}).

\subsection*{Proof of Theorem \ref{App to the noninner Conj}}
Set $A=Z(\Phi(G))$.  As we have mentioned above, we may suppose  $C_G(A)=\Phi(G)$, and so $Q=G/\Phi(G)$.  Theorem \ref{The cohomological auxilliary result} shows that the $Q$-module $A$ is not   cohomologically trivial,  so by the Gasch\"{u}tz-Uchida theorem we have $\hat{H}^1(Q,A)\neq 0$.  Now, Lemma \ref{From derivations to automorphisms} guarantees that $\mathrm{Aut}(G;A)$ contains non-inner automorphisms, and our theorem will follow once we show that $\mathrm{Aut}(G;A)$ has exponent $p$.  As $\mathrm{Aut}(G;A)$ is isomorphic to $Z^1(Q,A)$, we need only to show that the latter has exponent $p$.  Let $\delta \in Z^1(Q,A)$ and $x\in Q$.  By Lemma \ref{The structure of the modules generated by image of derivation}, we have $A_{\delta,x}\cong \mathfrak{O}/\mathfrak{P}^{n}$ for some  integer $n$.  Our assumption on $A$ guarantees that $A_{\delta,x}$ can be generated by $p-2$ elements, so $n\leq p-2$ by Corollary \ref{Structure of om as abelian groups}.  It follows that $A_{\delta,x}$ is elementary abelian; thus $p\delta(x)=0$ for all $x\in Q$, that is $p\delta=0$, as claimed.

\section{Further results and questions}\label{Concl Sec}  
\subsection*{Extensions of discrete valuation rings}
Let $\mathfrak{o}$ be a discrete valuation ring with maximal ideal  $\mathfrak{p}$, and $e$ be a postive integer.  There is a systematic way  for constructing discrete valuation rings $\mathfrak{O}$ that extend  $\mathfrak{o}$ and satisfy:  (i) $\mathfrak{O}$ is a finitely generated 
$\mathfrak{o}$-module, and (ii) the maximal ideal $\mathfrak{P}$ of $\mathfrak{O}$ satisfies $\mathfrak{p}\mathfrak{O}=\mathfrak{P}^e$.  Indeed, consider any Eisenstein polynomial $f=X^e+a_{e-1}X^{e-1}+\cdots+a_0$ over 
$\mathfrak{o}$ (so $a_{e-1},\ldots a_0 \in \mathfrak{p}$ and $a_0 \notin \mathfrak{p}^2$), and let $\mathfrak{O}= \mathfrak{o}[X]/(f)$.  Then,  $\mathfrak{O}$ is a discrete valuation ring which satisfies the previous conditions (cf. \cite[Chap. I, \S 6]{Serre}).  

Let us include a proof for the reader convenience.   First,  
 identify $\mathfrak{o}$ to its canonical image in  $\mathfrak{O}$.   It is clear that $\bar{1}, \bar{X}, \ldots, \bar{X}^{e-1}$ form a basis for the $\mathfrak{o}$-module $\mathfrak{O}$ (with $\bar{X}$ denotes the canonical image of $X$ in $\mathfrak{O}$).   Set $\mathfrak{P}=(\bar{X})$.   By  assumption, $\mathfrak{p}=(a_0)$, and since
 $a_0=-(\bar{X}^{e}+\cdots+a_1 \bar{X})$, we have
 $\mathfrak{p}\mathfrak{O}\subseteq \mathfrak{P} $.   Next, observe that  reduction modulo $\mathfrak{p}$ induces an onto ring morphism $\varphi:\mathfrak{o}[X]\to (\mathfrak{o}/\mathfrak{p})[X]$ which sends $f$ to $X^e$; whence we have an onto morphism $\mathfrak{O}\to (\mathfrak{o}/\mathfrak{p})[X]/(X^e)$, whose kernel  is $\mathfrak{p}\mathfrak{O}$. It follows that $\mathfrak{P}$ is the unique maximal ideal of $\mathfrak{O}$ containing $\mathfrak{p}\mathfrak{O}$.   If $\mathfrak{m}$ is a maximal ideal of $\mathfrak{O}$, then  $\mathfrak{p}\mathfrak{O} \subseteq \mathfrak{m}$ (otherwise, we would have $\mathfrak{p}\mathfrak{O}+ \mathfrak{m}=\mathfrak{O}$, so by Nakayama's Lemma,   $\mathfrak{m}=\mathfrak{O}$, a contradiction); thus $\mathfrak{m}=\mathfrak{P}$. This shows that $\mathfrak{O}$ is a local ring, which is clearly Noetherien,  and its maximal ideal $\mathfrak{P}$ is generated by a non-nilpotent  element; thus $\mathfrak{O}$ is a discrete valuation ring (cf. \cite[Proposition 2, Chap. I]{Serre}).  The fact that $\mathfrak{p}\mathfrak{O}=\mathfrak{P}^e$ is immediate by virtue of the isomorphism 
 $\mathfrak{O}/\mathfrak{p}\mathfrak{O}\cong k[X]/(X^e)$.  

\subsection*{Higher cyclotomic polynomials}

Take $\mathfrak{o}=\mathbb{Z}_p$, and  $g_m(X)=\Phi_{p^m}(X+1)$, where $\Phi_{p^m}$ denotes the $p^m$th cyclotomic polynomial.  Plainly,  $g_m(X)$  is  an Eisenstein polynomial over $\mathbb{Z}_p$, and 
$\mathfrak{O}_m=\mathbb{Z}_p[X]/(g_m)$ coincides with the ring of integers of the cyclotomoic field $\mathbb{Q}_p[\zeta_{p^m}]$, where $\zeta_{p^m}$ is a primitive $p^{m}$th root of the unity. By the above paragraph,  we have  $e=(p-1)p^{m-1}$, that is  the maximal ideal $\mathfrak{P}$ of $\mathfrak{O}_m$ satisfies    $p\mathfrak{O}_m=\mathfrak{P}^{ (p-1)p^{m-1}}$.  

Let $A$ be a finite  abelian $p$-group, and $\langle x \rangle$  a cyclic group acting on $A$.  We may view $A$ as a module over the polynomial rings $\mathbb{Z}_p[X]$, with $X$ acts on $A$ as the endomorphism $x-1$. Obviously,  $A$  satisfies $A\cdot g_m=0$, if and only if 
\begin{equation}\label{Condition for Om mod}
A\cdot (1+x^{p^{m-1}}+\cdots+x^{p^{m-1}(p-1)})=0.
\end{equation}
This condition amounts to saying   that
$x^{p^{m-1}}$ acts on $A$ as a splitting automorphism of order $p$.   It follows that every $A$  admitting such an automorphism has a natural structure of a module over  $\mathfrak{O}_m=\mathbb{Z}_p[X]/(g_m)$.  Applying Theorem \ref{Main 1} with $\mathfrak{o}=\mathbb{Z}_p$ and 
$\mathfrak{O}= \mathfrak{O}_m$,   yields a more general version of Corollary \ref{Structure of om as abelian groups} (the latter follows by taking $m=1$).  For instance, if $A\cong \mathfrak{O}_m/\mathfrak{P}^n$ as an 
$\mathfrak{O}_m$-module, then the structure of $A$ as an abelian group is given by
$$A\cong 
 (\mathbb{Z}/p^{l+1}\mathbb{Z})^{r} \oplus (\mathbb{Z}/p^{l}\mathbb{Z})^{(p-1)p^m-r}, $$
	where $n=l(p-1)p^{m-1}+r$, and $0< r \leq (p-1) p^{m-1}$.  

We have also an obvious analogue of Theorem  \ref{Main 2}. Assume that $\langle x \rangle$ acts on $A$ in such a way that $x^{p^{m-1}}$ induces a splitting automorphism of order $p$ of $A$ (in other words, $x$ satisfies the condition (\ref{Condition for Om mod})).  Let $H$ be any extension of $A$ by $\langle x \rangle$, and $\lambda=(n_1,\ldots,n_s)$ be the partition corresponding to $A$ viewed as a module over  $\mathfrak{O}_m=\mathbb{Z}_p[X]/(g_m)$.  As in the proof of Theorem \ref{Main 2}, we have
$$ \gamma_{n+1}(H)=A\cdot \mathfrak{P}^{n}, \mbox{ for all } n\geq 1.$$
It follows similarly that:
	\begin{itemize}
		\item[(a)]  $H$ has nilpotency class $n_1$, and $A$ has exponent  $p^{l}$, where $l$ is the smallest integer $\geq \dfrac{n_1}{p^{m-1}(p-1)}$. 
		\item[(b)]  $A^{p^n}=\gamma_{n(p-1)p^{m-1}+1}(H)$, for all $n\geq 1$.
		\item[(c)] For   $j\geq 2$, the factor group  $\gamma_{j}(H)/\gamma_{j+1}(H)$ is elementary abelian of rank $d_j$, where   $(\ell(\lambda),d_2,d_3,\ldots)$ is the partition conjugate to $\lambda$. 
		\item[(v)] $A/\gamma_{2}(H)$ is elementary abelian of rank $d_1=\ell(\lambda)$, and   $$d(A)= \sum_{j=1}^{(p-1)p^{m-1}} d_j.$$
	\end{itemize}
Of course, one can cover Theorem \ref{Main 2} by taking $m=1$.

\subsection*{The number of induced modules}
Let $\lambda$ be a partition, and $M$ be the corresponding $\mathfrak{O}$-module.  As $\mathfrak{O}/\mathfrak{P}$ is an $\mathfrak{o}$-module of length $d$, the $\mathfrak{o}$-module 
$\rho_{\ast}M$ has length $\vert \lambda \vert d$.   On the other hand, Theorem \ref{Main 1} shows that $\rho_{\ast}M$ has length $\sum_{i\geq 1} if_{e,i}(\lambda)d$; therefore,
\begin{equation}\label{Main identity for $f_i$}
\sum_{i\geq 1} if_{e,i}(\lambda)=\vert\lambda\vert.
\end{equation} 
One can give a direct proof to the above identity.  First,  the formulae  (\ref{The relation between the weight and the sum of the parts})  and (\ref{The invariants of the associated abelian group}) yield  at once
\begin{equation}\label{Alternative formula for fei}
f_{e,i}(\lambda)=\ell (\lambda_{e,i+1})(i+1)e-\ell(\lambda_{e,i}) (i-1)e-\vert \lambda_{e,i+1}\vert+\vert\lambda_{e,i}\vert.
\end{equation}
Next, observe that in the sum $\sum_{i\geq 1} if_i(\lambda)$,  the term   $\sum_{i\geq 1}i(i+1)\ell(\lambda_{e,i+1})e$ vanishes with  $-\sum_{i\geq 1}i(i-1)\ell(\lambda_{e,i})e$; whence
\begin{align*}
	\sum_{i\geq 1} if_{e,i}(\lambda)& = \sum_{i\geq 1} i \vert \lambda_{e,i}\vert -\sum_{i\geq 1}i\vert \lambda_{e,i+1}\vert \\
	& = \sum_{i\geq 1} i \vert \lambda_{e,i}\vert -\sum_{i\geq 1}(i-1)\vert \lambda_{e,i}\vert\\
	& =\sum_{i\geq 1} \vert\lambda_{i}\vert\\
	&  = \vert\lambda\vert, 
\end{align*}
as claimed.   

\medskip 
Let $\mathcal{P}$ be the set of all partitions, and  $\mathcal{P}_n$ be the set of partitions of $n\geq 1$.  It follows that the map $f_e:\mathcal{P}\to \mathcal{P}$, defined by
$f_e(\lambda)=(1^{f_{e,1}(\lambda)}\ldots i^{f_{e,1}(\lambda)}\ldots)$, sends $\mathcal{P}_n$ to itself (that is $f_e(\mathcal{P}_n)\subseteq \mathcal{P}_n$).   To simplify, let $f_e(n)$ denote the cardinality of the set $f_e(\mathcal{P}_n)$.  Clearly, $f_e(n)$ is the number of $\mathfrak{o}$-modules (up to isomorphism) of length $n$ their structure is induced by an action of $\mathfrak{O}$.  In particular, 
 for $e=(p-1)p^{m-1}$ ($p$ is a prime), $f_{e}(n)$ gives the number of abelian groups of order $p^n$ that admit an automorphism whose $p^{m-1}$th power is splitting automorphism of order $p$. 
 
 	\begin{prob}\label{The proportion of Abelian groups having splitting aut } For every $e\geq 2$, determine the map $n\mapsto  f_e(n)$.  In particular, determine the behavior of the proportion  $ f_e(n)/\mathrm{p}(n)$, as $n\to +\infty$ (where $\mathrm{p}(n)=	\vert \mathcal{P}_n\vert$.)
 \end{prob}
 For example, $f_{e}(n)=1$ if $n\leq e$ (as every partition $\lambda$ of such an $n$  gives the $\mathfrak{o}$-module    $(\mathfrak{o}/\mathfrak{p})^{n}$).  Assume $n$ lies in the interval $ \left]e, 2e\right]$, and  $\lambda$ is a partition of $n$.  If $\lambda_{e,2}$ is empty, then $\lambda$ gives the $\mathfrak{o}$-module    $(\mathfrak{o}/\mathfrak{p})^{n}$.  If  $\lambda_{e,2}$  is not empty, then it contains exactly one component $n_1$ say, and  $\lambda$ yields the $\mathfrak{o}$-module  $(\mathfrak{o}/\mathfrak{p}^2)^r\oplus (\mathfrak{o}/\mathfrak{p})^{n-2r}$, where $r=n_1-e$. Since the possible values of $r$ are $e+1,\ldots,n$,  it follows that $f_{e}(n)=n-e+1$ for $e<n\leq 2e$.  One can compute $f_e(n)$ similarly in the intervals $ \left]2e, 3e\right]$, $ \left]3e, 4e\right]$, etc.,  though this procedure becomes less and less adequate. It also seems that the values of $f_e(n)$ are related to that of  $f_{e'}(n)$ for $e$ dividing $e'$.
 
\subsection*{On the structure of some $p$-groups}
 
 Assume $A$ is  an abelian finite $p$-group, and $x$ is an  automorphism of $A$ of order $p$.  Obviously, $x$ acts on $A\cdot (x-1)$ as a splitting automorphism. Corollary \ref{Structure of om as abelian groups} and Theorem \ref{Main 2} yield then restrictions on the structure of $A\cdot (x-1)$, and  on that of $A/A^{\langle x \rangle}$  as well (where $A^{\langle x \rangle}$ is the subgroup of  fixed elements under $x$).   Note that an answer to Problem \ref{The proportion of Abelian groups having splitting aut } would shed more light on these restrictions.  
 
 The first possible application of the latter would be an effective classification of the non-abelian finite $p$-groups $G$ that admit an abelian subgroup $A$ of index $p$;  such a classification would allow us to compute easily the number  of such $G$'s having order $p^n$, for every $n\geq 1$.  
 
 One may start with the case where there exists an elements $x\in G\setminus A$ which acts (by conjugation) on $A$ as a splitting automorphism of order $p$.  Fixing $A$ and the action of $Q=\langle x\rangle$,  the    extensions of $A$ by $Q$, up to equivalence,  correspond to the elements of the second cohomology group $\hat{H}^2(Q,A)$, which is easily seen to be isomorphic to  $(\mathbb{Z}/p\mathbb{Z})^{\ell(\lambda)}$, where $\lambda$ is the partition corresponding to $A$ as an $\mathfrak{O}$-module.  

One may also use  the forgoing results in classifying metabelian  $p$-groups of  maximal class. Such a classification was already  established by Miech \cite{Miech1}.  A simplified proof  appeared later in \cite{Miech2}, although one may require a more transparent  proof that allows us to find the number of such $p$-groups, for every given order,  in a straightforward manner. Note that Theorem \ref{Main 2}, shows at least that 
$\gamma_3(G)$ has the form $\mathfrak{O}/\mathfrak{P}^n$, for every such a $p$-group $G$. 

Finally, note that if $M$ is a maximal subgroup of an arbitrary finite $p$-group $G$ such that $Z(G)\leq M$, then the restrictions yielded by Corollary \ref{Structure of om as abelian groups} and Theorem \ref{Main 2} apply to $Z(M)/Z(G)$. Indeed, every $x\in G\setminus M$ acts by conjugation on $[Z(M),x]$  as a splitting   automorphism of order $p$, and obviously, $Z(M)/Z(G)\cong [Z(M),x]$.

\section*{Acknowledgment}
We are grateful to Naomi Jochnowitz and Evgeny Khukhro for their interest and for several invaluable discussions.


\end{document}